\theoremstyle{plain}
\newtheorem{thm}{Theorem}[section]
\newtheorem{prop}[thm]{Proposition}
\newtheorem{cor}[thm]{Corollary}
\newtheorem{prob}[thm]{Problem}
\theoremstyle{definition}
\newtheorem{defn}[thm]{Definition}
\newtheorem{eg}[thm]{Example}
\theoremstyle{remark}
\newtheorem{rem}[thm]{Remark}
\newtheorem{claim}[thm]{Claim}
\newcommand{\bC}{\ensuremath{\mathbb{C}}}
\newcommand{\bN}{\ensuremath{\mathbb{N}}}
\newcommand{\bP}{\ensuremath{\mathbb{P}}}
\newcommand{\bQ}{\ensuremath{\mathbb{Q}}}
\newcommand{\bR}{\ensuremath{\mathbb{R}}}
\newcommand{\bZ}{\ensuremath{\mathbb{Z}}}
\newcommand{\cA}{\ensuremath{\mathcal{A}}}
\newcommand{\cC}{\ensuremath{\mathcal{C}}}
\newcommand{\cH}{\ensuremath{\mathcal{H}}}
\newcommand{\cL}{\ensuremath{\mathcal{L}}}
\newcommand{\cO}{\ensuremath{\mathcal{O}}}
\newcommand{\cS}{\ensuremath{\mathcal{S}}}
\DeclareMathOperator{\Bs}{Bs}
\DeclareMathOperator{\Pic}{Pic}
\DeclareMathOperator{\Sing}{Sing}
\DeclareMathOperator{\Spec}{Spec}
\DeclareMathOperator{\Aut}{Aut}
\DeclareMathOperator{\NE}{NE}
\DeclareMathOperator{\Sym}{Sym} 
\DeclareMathOperator{\Supp}{Supp} 
\newcommand{\NEbar}{{\overline{\NE}}}
\begin{document}

\title
[Boundedness of CY hypersurfaces]
{On birational boundedness of \\ 
some Calabi--Yau hypersurfaces}

\author{Taro Sano}
\address{Department of Mathematics, Faculty of Science, Kobe University, Kobe, 657-8501, Japan}
\email{tarosano@math.kobe-u.ac.jp}

\maketitle

\begin{abstract} 
	We show the birational boundedness of anti-canonical irreducible hypersurfaces which form $3$-fold
	plt pairs. We also treat a collection of Du Val K3 surfaces which is birationally bounded but unbounded.  
	\end{abstract} 

\tableofcontents

\section{Introduction} 

In the classification of algebraic varieties, Calabi--Yau manifolds (CY manifolds for short) form an important class. 
It is not known whether $n$-dimensional CY manifolds form a bounded family for a fixed $n \ge 3$. 

On the other hand, in the 2-dimensional case, there are infinitely many projective families of K3 surfaces 
although they are analytically deformation equivalent. Reid observed that there are only 95 families of weighted K3 hypersurfaces (\cite[pp.300]{MR605348}, \cite[13.3]{MR1798982}). 
Inspired by this, we ask whether K3 surfaces in a 3-fold are bounded or not. 	
We show the following statement in this note.  

\begin{thm}\label{thm:K+D=0case}
Let $(X,D)$ be a plt pair such that $\dim X =3$, $D$ is irreducible and reduced, and $K_X+D \sim 0$. 
Then $D$ forms a birational bounded family. 
\end{thm}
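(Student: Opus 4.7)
The plan is to reduce the $3$-fold statement to a boundedness result for $2$-dimensional log Calabi--Yau pairs by applying adjunction along $D$. Since $(X,D)$ is plt with $D$ a reduced prime divisor, $D$ is normal (a standard consequence of plt-ness, since a log canonical center of maximal dimension of a plt pair is normal), and adjunction produces an effective $\bQ$-divisor $\Delta_D := \mathrm{Diff}_D(0)$ with
\begin{equation*}
K_D + \Delta_D = (K_X + D)|_D \sim 0,
\end{equation*}
while inversion of adjunction implies that $(D, \Delta_D)$ is klt. Thus $(D, \Delta_D)$ is a $2$-dimensional klt log Calabi--Yau pair.

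To obtain uniformity across all $(X,D)$ satisfying the hypotheses, I would next control the coefficients of $\Delta_D$. For plt pairs with integral irreducible boundary, Shokurov's analysis of the different shows that the coefficients of $\mathrm{Diff}_D(0)$ all lie in
\begin{equation*}
\Gamma := \{1 - 1/m : m \in \bZ_{>0}\} \subset [0,1),
\end{equation*}
a set satisfying the descending chain condition. Thus every pair $(D,\Delta_D)$ arising in the theorem carries coefficients in this single fixed DCC set.

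Birational boundedness of $D$ would then follow by invoking Alexeev's boundedness theorem for log Calabi--Yau surface pairs with coefficients in a fixed DCC set: this yields boundedness of the family of pairs $(D, \Delta_D)$, and a fortiori birational boundedness of the surfaces $D$.

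The main anticipated obstacle is the accurate invocation of surface boundedness in the form required here. The coefficients of $\Delta_D$ can accumulate at $1$ (as $m \to \infty$), so the chosen boundedness statement must cover this case; several formulations in the literature require an $\epsilon$-klt hypothesis or coefficients bounded away from $1$, neither of which is automatic in our setting. A natural fallback would be to use effective log abundance on $(D, \Delta_D)$ to produce a morphism of bounded degree to projective space, yielding birational boundedness directly without appealing to full moduli-theoretic boundedness of the pairs.
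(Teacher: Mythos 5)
There is a genuine gap, and it is precisely the central difficulty of the paper. Your reduction to the surface pair $(D,\Delta_D)$ with $K_D+\Delta_D\sim 0$ and coefficients in $\{1-1/m\}$ is fine as far as it goes (and the paper carries out exactly this step in Proposition \ref{prop:differentbounded}(ii), using the global ACC to put the coefficients in a finite set, so accumulation at $1$ is not the issue). The problem is the case $\Delta_D=0$, which occurs exactly when $X$ is smooth in codimension $2$ along $D$; then $D$ is a surface with canonical singularities and $K_D\sim 0$, i.e.\ a Du Val K3, Enriques, abelian, or bielliptic surface. No surface-level boundedness theorem applies here: Alexeev's Theorem 6.9 explicitly excludes this case, and in fact K3 surfaces (and abelian surfaces) do \emph{not} form a bounded, or even birationally bounded, family --- the paper proves this in Claim \ref{claim:unbddK3} (polarized K3 surfaces of degree $2d$ with Picard rank $1$ are pairwise non-birational, so no single family dominates them all). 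Your fallback via effective log abundance also fails in this case: $K_D\sim 0$ gives a map to a point, not a bounded-degree map to projective space, so it yields nothing.

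The content of the theorem is that the ambient $3$-fold forces boundedness where the surface alone does not. The paper's proof runs a $K_X$-MMP (after a small $\bQ$-factorialization), uses Proposition \ref{prop:pltpreserve} to preserve the plt CY condition, and then analyzes the resulting Mori fiber space $X_m\rightarrow S$ case by case: boundedness of $\epsilon$-lc Fano $3$-folds when $\dim S=0$, a dedicated argument for del Pezzo fibrations (Proposition \ref{prop:delpezzofibbdd}) when $\dim S=1$, and for conic bundles either Alexeev applied to the base pair $(S,\tfrac{1}{2}R)$ when $D_m\rightarrow S$ is branched in codimension $1$, or --- in the unbranched case --- a cyclic-cover argument showing $K_{X_m}+D_m$ must be nontrivial $2$-torsion, which is excluded by your hypothesis $K_X+D\sim 0$. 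None of this three-dimensional input appears in your proposal, and without it the argument cannot close.
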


An interesting feature is that $X$ can be unbounded as in Example \ref{eg:DPfib}. 
%We show the boundedness of $D$ for a plt pair $(X,D)$ when $D$ is an abelian surface in Proposition \ref{prop:abeliancase}. 
In fact, we study the birational boundedness of a prime divisor $D$ for a $3$-fold plt pair $(X,D)$ such that $K_X +D \equiv 0$ in Theorem \ref{thm:Kequiv0case}. 
It turns out that $D$ is birationally bounded unless $X$ is birational to a conic bundle over a Du Val surface $S$ with $K_S \sim 0$. 
The divisor $D$ can be unbounded as in the exceptional case as in Example \ref{eg:NikulinAbel}.  
The pair as above is called a plt CY pair in this note (Definition \ref{defn:pltCYpair}). 
CY pairs have been studied in several contexts of algebraic geometry (cf. \cite{MR3504536},  \cite{MR3539921}, \cite{MR3997127}, etc). 

The following example due to Oguiso forces us to use `birational boundedness' rather than `boundedness'  in Theorem \ref{thm:K+D=0case}.  

\begin{thm}(=Example \ref{eg:unbdd})
Fix any positive integer $d$. Then we have an unbounded collection of Du Val K3 surfaces which are birational contractions of smooth K3 surfaces of degree $2d$. 
\end{thm}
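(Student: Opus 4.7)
The plan is to exhibit, for each fixed positive integer $d$, an infinite sequence of Du Val K3 surfaces $Y_n$ admitting ample polarizations whose degrees tend to infinity with $n$, each obtained as the target of a birational contraction from a smooth K3 surface of degree $2d$. Unboundedness then follows because any bounded family of polarized varieties has bounded Hilbert polynomials.

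First, I would use the surjectivity of the period map for K3 surfaces to construct, for each integer $n \geq 2$, a projective K3 surface $X_n$ of Picard number $2$ whose Picard lattice is generated by classes $H_n$ and $E_n$ with $H_n^2 = 2d$, $E_n^2 = -2$, and $H_n \cdot E_n = 2n$. A short lattice-theoretic argument shows that for $n$ sufficiently large (in terms of $d$) the only effective $(-2)$-class is $E_n$ itself, so $H_n$ is ample and $X_n$ is a smooth K3 surface of degree $2d$.

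Next, I would take $L_n := H_n + n E_n$, which satisfies $L_n \cdot E_n = 0$ and $L_n^2 = 2d + 2n^2$. Being nef and big on a K3 surface, $L_n$ is semi-ample by the base-point-free theorem, so $|k L_n|$ for $k \gg 0$ defines a birational morphism $\pi_n \colon X_n \to Y_n$ contracting the $(-2)$-curve $E_n$ to a Du Val singularity of type $A_1$. The pushforward $A_n := \pi_{n,*} L_n$ is then an ample divisor on the Du Val K3 surface $Y_n$ with $A_n^2 = 2d + 2n^2$.

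Since the polarization degrees $A_n^2 = 2d + 2n^2$ are pairwise distinct and unbounded in $n$, the collection $\{(Y_n, A_n)\}$ cannot lie in any bounded family, yet each $Y_n$ is by construction a birational contraction of the smooth K3 surface $X_n$ of degree $2d$. The main obstacle is the lattice-theoretic bookkeeping in the first step: one must verify both that the prescribed Picard lattice is realized by a projective K3 surface (via Nikulin's theory of primitive lattice embeddings into the K3 lattice) and that no additional effective $(-2)$-class occurs that would obstruct either the ampleness of $H_n$ or the nefness of $L_n$. Both statements reduce to a direct enumeration of $(-2)$-vectors in the rank-$2$ quadratic form.
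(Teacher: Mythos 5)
Your construction is in essence the same as the paper's (Oguiso's Example~\ref{eg:unbdd}): realize a projective K3 surface with Picard lattice $\bZ H_n \oplus \bZ E_n$, $H_n^2=2d$, $E_n^2=-2$, contract the $(-2)$-curve to an $A_1$ point, and measure polarization degrees on the contraction. However, the final inference has a genuine gap. You deduce unboundedness of $\{Y_n\}$ from the fact that the particular polarizations you construct satisfy $A_n^2=2d+2n^2\to\infty$. That does not suffice: a collection of varieties is bounded as soon as each member admits \emph{some} ample line bundle of bounded degree, so exhibiting one ample line bundle of large degree on each $Y_n$ proves nothing (every projective surface carries ample line bundles of arbitrarily large degree; $\bP^2$ with $\cO(k)$ would be ``unbounded'' by your criterion). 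What must be shown --- and what the paper's example actually shows --- is that \emph{every} ample Cartier divisor on $Y_n$ has self-intersection at least $2d+2n^2$. In your setting the repair is immediate and is exactly the paper's computation: an ample Cartier divisor $L$ on $Y_n$ pulls back to a nef and big class $\pi_n^*L\in\Pic(X_n)$ with $\pi_n^*L\cdot E_n=0$, and $E_n^{\perp}\cap\Pic(X_n)=\bZ(H_n+nE_n)=\bZ L_n$, so $\pi_n^*L=aL_n$ with $a\ge 1$ and $L^2=a^2(2d+2n^2)\ge 2d+2n^2$. (The paper runs this with $H\cdot C=m$ arbitrary, getting $b=am/2$ and $L^2=a^2(2d+m^2/2)$.) Without this orthogonality step the argument collapses, so you should regard it as the heart of the proof rather than an afterthought.

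A secondary inaccuracy: the claim that for $n\gg 0$ the only effective $(-2)$-class is $E_n$ is false in general. The equation $dx^2+2nxy-y^2=-1$ typically has further solutions coming from the Pell equation $v^2-(d+n^2)u^2=1$; indeed the paper's Remark~\ref{rem:embedquarticcont} exhibits a second $(-2)$-curve $\Gamma$ spanning the other extremal ray of $\NEbar(S)$. This does not break the construction --- one checks directly that every effective $(-2)$-class pairs positively with $H_n$ and non-negatively with $L_n$, or one quotes Oguiso's ampleness lemma as the paper does --- but your stated justification for the ampleness of $H_n$ and nefness of $L_n$ is not correct as written.
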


When $d=2$, the examples are birational contractions of some smooth quartic surfaces and 
infinitely many of them can be embedded into rational 3-folds (Remark \ref{rem:embedquarticcont}).  
Thus the statement in Theorem \ref{thm:K+D=0case} is optimal in a sense.  
	
Classically, examples of CY 3-folds are constructed by taking weighted or toric hypersurfaces. 
In Section 4, we ask whether CY hypersurfaces in rationally connected varieties form a bounded family. 	
We confirm that toric hypersurfaces form a bounded family in Corollary \ref{cor:torichypersurface}. 

Throughout this paper, we work over the complex number field $\bC$.
	
\section{Finiteness of anticanonical Calabi--Yau surfaces in a $3$-fold}

We follow the notation in \cite{MR1658959}. 

\begin{defn}\label{defn:pltCYpair}
We say that $(X,D)$ is a {\it plt Calabi--Yau (CY) pair} if $(X,D)$ is a plt pair such that $K_X +D \equiv 0$. 
A plt CY pair $(X,D)$ is called a {\it reduced plt CY pair} if $D$ is a reduced divisor. 
\end{defn}
Note that $X$ can be non-$\bQ$-Gorenstein, but the support of the round down $\lfloor D \rfloor$ of $D$ is normal (cf. \cite[Proposion 5.51]{MR1658959}). 
Note also that $X$ is $\bQ$-factorial in codimension 2 (cf. \cite[Proposition 9.1]{MR2854859}) and $K_X +D$ is torsion (cf. \cite[Corollary 10]{MR3022959}, \cite[Theorem 1.2]{MR3048544}). 
%CY pairs in more general settings are studied in, 

When $K_X +D \sim 0$ and $D$ is reduced, we have the following. 

\begin{prop}\label{prop:pltcanonical}
Let $(X,D)$ be a reduced plt CY pair such that $K_X +D \sim 0$. 

Then $D$ has only canonical singularities. 
If $X$ is $\bQ$-Gorenstein, then $X$ has only canonical singularities. 
\end{prop}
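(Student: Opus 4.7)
The plan is to take a log resolution of $(X,D)$ and observe that the hypothesis $K_X + D \sim 0$ forces the plt discrepancies to be integers. Combined with the plt condition that these discrepancies are strictly greater than $-1$, they must then be non-negative. Both assertions will follow essentially for free from this one observation; the main place the stronger hypothesis $K_X + D \sim 0$ (rather than $\equiv 0$) is used is precisely this integrality step.

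Concretely, I would pick a log resolution $f \colon Y \to X$ of $(X,D)$, with $\tilde{D}$ the strict transform of $D$ and $\{E_i\}$ the $f$-exceptional prime divisors, and write
\[
K_Y + \tilde{D} = f^{*}(K_X + D) + \sum_i a_i E_i.
\]
The left-hand side is an integral divisor, and $f^{*}(K_X+D)$ is an integral Cartier divisor since $K_X + D \sim 0$ is Cartier, so $a_i \in \bZ$. The plt hypothesis then gives $a_i > -1$, hence $a_i \ge 0$ for every $i$. For the first assertion I restrict this identity to $\tilde{D}$. Using $K_{\tilde{D}} = (K_Y + \tilde{D})|_{\tilde{D}}$ (adjunction on smooth $Y$) together with the adjunction $(K_X+D)|_D = K_D + \operatorname{Diff}_D(0)$ with $\operatorname{Diff}_D(0) \ge 0$, and setting $\mu := f|_{\tilde{D}} \colon \tilde{D} \to D$, one obtains
\[
K_{\tilde{D}} - \mu^{*} K_D \;=\; \mu^{*}\operatorname{Diff}_D(0) + \sum_i a_i\bigl(E_i|_{\tilde{D}}\bigr) \;\ge\; 0.
\]
Since the SNC condition on $Y$ makes $\mu$ a log resolution of $D$, non-negativity of the discrepancies shows $D$ has only canonical singularities.

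For the second assertion I assume $X$ is $\bQ$-Gorenstein so that the ordinary discrepancies of $X$ are defined by $K_Y = f^{*}K_X + \sum a'_i E_i$, and I decompose $f^{*}D = \tilde{D} + \sum m_i E_i$ with $m_i \ge 0$. Comparing with the expansion above for $K_Y + \tilde{D}$ yields $a'_i = a_i + m_i \ge 0$, proving $X$ is canonical. I do not anticipate any serious obstacle; the only minor check is that $K_D$ is $\bQ$-Cartier (so that $\mu^{*}K_D$ makes sense), but this follows from the inversion of adjunction conclusion that $(D, \operatorname{Diff}_D(0))$ is klt, which forces $D$ itself to have klt, hence $\bQ$-Gorenstein, singularities.
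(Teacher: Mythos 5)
Your argument is essentially the paper's: it takes the same log resolution, notes that the $a_i$ are integers $>-1$ (hence $\ge 0$) because $K_X+D$ is Cartier, and restricts the identity to the strict transform of $D$; the $\bQ$-Gorenstein case via $a_i'=a_i+m_i$ is the same implicit computation. The one imprecise step is your justification that $K_D$ is $\bQ$-Cartier: a klt pair $(D,\operatorname{Diff}_D(0))$ only makes $K_D+\operatorname{Diff}_D(0)$ $\bQ$-Cartier a priori, not $K_D$ itself. The paper closes this more cleanly by observing that $a_i\ge 0$ already forces $X$ to be canonical, hence Gorenstein, in codimension $2$ along $D$, so $\operatorname{Diff}_D(0)=0$ and $K_D=(K_X+D)|_D\sim 0$ is Cartier; with that observation your restriction argument goes through verbatim.
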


\begin{proof} 
We can take a log resolution $\mu \colon \tilde{X} \rightarrow X$ of $(X,D)$ such that 
\[
K_{\tilde{X}} + \tilde{D} = \mu^*(K_X +D) + \sum a_i E_i  
\]
for some integers $a_i \ge 0$, where $\tilde{D}$ is the strict transform of $D$ and $E_i$ is the exceptional divisor. 
Note that $a_i \ge 0$ since $K_X +D$ is Cartier. This implies that $X$ has only canonical singularities in codimension 2 (outside the non-$\bQ$-Gorenstein locus). 
In particular,  we see that $K_X$ is Cartier in codimension 2 and $(K_X+D)|_D = K_D$ is trivial. 
Thus, by restricting the equality to $\tilde{D}$, we see that $D$ has only canonical singularities. 
\end{proof}

The plt CY property is preserved by steps of the $K_X$-MMP as follows. 

\begin{prop}\label{prop:pltpreserve}
Let $(X,D)$ be a  reduced plt CY pair such that $X$ is projective and $\bQ$-factorial. 
Let 
$
\phi \colon X \dashrightarrow Y  
$
be a birational map which is a step of a $K_X$-MMP, that is, $\phi$ is either a divisorial contraction or a flip. Let $D_Y:= \phi_* D$. 
%Let
%\[
%\xymatrix{
% & W \ar[ld]_{\nu_X} \ar[rd]^{\nu_Y} & \\
% X \ar@{-->}[rr]^{\phi} &   & Y  
%}
%\]
%be a common log resolution of $(X,D)$ and $(Y, D_Y)$, where 
% $D_Y \subset Y$ and $D_W \subset W$ are the strict transforms of $D$. 
 
 Then the pair $(Y, D_Y)$ is also a plt  CY pair. 
\end{prop}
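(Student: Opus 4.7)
The strategy exploits the identity $K_X + D \equiv 0$: on any $K_X$-negative extremal ray $R$ one has $(K_X + D) \cdot R = 0$, so the step $\phi$ is crepant for the log pair $(X, D)$, and both the plt property and the numerical triviality of $K + D$ then transfer automatically to $(Y, D_Y)$ via a discrepancy computation.

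First I would record the standard MMP outputs: $Y$ is projective and $\bQ$-factorial (cf.\ \cite[\S 3]{MR1658959}), so $D_Y$ and $K_Y$ are $\bQ$-Cartier. Since $K_X \cdot R < 0$ and $(K_X + D) \cdot R = 0$ force $D \cdot R > 0$, no component of the reduced divisor $D$ is contracted by $\phi$: the components of $\lfloor D \rfloor$ are disjoint in a plt pair (cf.\ \cite[Proposition 5.51]{MR1658959}), so for any $R$-curve $C \subset D_j$ one has $D \cdot C = D_j \cdot C > 0$, which rules out $D_j$ being the contracted divisor. Thus $D_Y = \phi_* D$ is a nonzero reduced divisor. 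The crux is then the crepant identity. In the divisorial case $\phi \colon X \to Y$ contracting $E$, writing $K_X + D - \phi^*(K_Y + D_Y) = cE$ and intersecting with a curve in $R$ yields $c(E \cdot R) = 0$; since $E \cdot R < 0$, one gets $c = 0$. In the flip case, I would take a common resolution $p \colon W \to X$, $q \colon W \to Y$. Because $\phi$ is an isomorphism in codimension one, every $p$-exceptional prime divisor on $W$ is also $q$-exceptional, so $\Delta := p^*(K_X + D) - q^*(K_Y + D_Y)$ is supported on this common exceptional set. Global numerical triviality $K_X + D \equiv 0$ makes $p^*(K_X + D)$ numerically trivial on $W$, hence $q$-numerically trivial, while $q^*(K_Y + D_Y)$ is trivially $q$-numerically trivial; the negativity lemma (\cite[Lemma 3.39]{MR1658959}) applied to both $\Delta$ and $-\Delta$ then forces $\Delta = 0$.

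With the crepant identity in hand, the discrepancies satisfy $a(F, X, D) = a(F, Y, D_Y)$ for every prime divisor $F$ over the common birational class, so the plt hypothesis on $(X, D)$ gives $a(F, Y, D_Y) > -1$ for every $F$ exceptional over $X$. In the divisorial case, the only additional prime divisor exceptional over $Y$ is $E$ itself; since $E$ is not a component of $D$, one has $a(E, Y, D_Y) = a(E, X, D) = 0 > -1$. Finally, $q^*(K_Y + D_Y) = p^*(K_X + D) \equiv 0$ combined with surjectivity of $q$ forces $(K_Y + D_Y) \cdot C = 0$ for every curve $C \subset Y$, yielding $K_Y + D_Y \equiv 0$. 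The main technical obstacle I anticipate is the flip case, where establishing the crepant identity on the common resolution needs the negativity lemma combined with careful bookkeeping of strict transforms and exceptional divisors; $\bQ$-factoriality of $Y$ is essential both to make sense of $q^*(K_Y + D_Y)$ and to ensure $\Delta$ is $\bQ$-Cartier so that the lemma applies.
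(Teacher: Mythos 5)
Your proposal is correct and follows essentially the same route as the paper: show the contracted locus avoids $\Supp D$ using positivity of $D$ on the extremal ray, deduce that the step is crepant for $K_X+D$ so that discrepancies are preserved, and conclude that $(Y,D_Y)$ is plt with $K_Y+D_Y\equiv 0$. The paper states the crepant identity more tersely (``since both $K_X+D$ and $K_Y+D_Y$ are trivial''), while you spell out the standard common-resolution/negativity-lemma verification, but the argument is the same.
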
 

\begin{rem}
We can not hope that $(Y, D_Y)$ is dlt when $(X, D)$ is so. 
Consider the pair $(\mathbb{P}^3, D)$ for a quartic surface $D$ with a simple elliptic singularity $p \in D$ and 
its blow-up $X_1 \rightarrow \mathbb{P}^3$ at $p$. 
Let $D_1 \subset X_1$ be the strict transform of $D$ and $E_1$ be the exceptional divisor. 
Then $(X_1, D_1+E_1)$ is a dlt CY pair, $X_1 \rightarrow \bP^3$ is a $K_X$-negative divisorial contraction and $(\bP^3, D)$ is lc and not dlt. 
\end{rem}

\begin{proof}
%We show that $(Y, D_Y)$ is a plt pair. 
Since we have $D_Y = \phi_*(D) \in |{-}K_Y|_{\bQ}$, it is enough to show that $(Y,D_Y)$ is plt.  
Let $E$ be an exceptional divisor over $Y$ (hence over $X$). 
If $\phi \colon X \rightarrow Y$ is a divisorial contraction and $E$ is the $\phi$-exceptional prime divisor, 
we see that $E \not\subset \Supp D$ by the negativity lemma (cf. \cite[Lemma 3.6.2]{MR2601039}) since $D$ is $\phi$-ample. 
Hence we have $-1 < a(E, X, D) = a(E, Y, D_Y)$ since both $K_X +D$ and $K_Y +D_Y$ are trivial.  
Also when $\phi$ is a flip, we have the same equality by the same reason.  
Hence we see that both discrepancies are greater than $-1$, thus $(Y, D_Y)$ is also plt. 
 \end{proof}

The following is based on the argument in the e-mail from Chen Jiang. 

\begin{prop}\label{prop:differentbounded}
Let $n \in \bZ_{>0}$ and $I \subset [1,0] \cap \bQ$ be a DCC set.  
Let $(X, D)$ be an $n$-dimensional projective plt CY pair such that the coefficients of $D$ belong to $I$. Then we have the following. 
\begin{enumerate}
\item[(i)] $(X,D)$ is $\epsilon$-plt for some $\epsilon >0$ which only depends on $n$ and $I$, that is, for an exceptional divisor $E$ over $X$, the discrepancy 
$a(E;X,D) > -1+ \epsilon$.    
\item[(ii)] Assume that $\dim X=3$ and $D$ is reduced. 

Then $D$ is bounded except when $D$ has only Du Val singularities and 
$X$ is smooth in codimension $2$ around $D$. 

We have $(K_X + D) |_{D} = K_D$ in the exceptional case. 
\end{enumerate}
\end{prop}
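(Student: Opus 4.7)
For (i), I would argue by contradiction using the global ACC of Hacon--McKernan--Xu for numerically trivial lc pairs with DCC coefficients. If no uniform $\epsilon$ worked, there would exist $n$-dimensional projective plt CY pairs $(X_k,D_k)$ with coefficients of $D_k$ in $I$ and prime divisors $E_k$ over $X_k$ with $a(E_k;X_k,D_k)\le -1+\epsilon_k$ and $\epsilon_k\searrow 0$. Extracting each $E_k$ on a $\mathbb{Q}$-factorial birational model $\pi_k\colon\tilde X_k\to X_k$ and writing
\[
\pi_k^{*}(K_{X_k}+D_k)=K_{\tilde X_k}+\tilde D_k+(1-\epsilon_k)E_k,
\]
one obtains a sequence of $n$-dimensional lc pairs whose log canonical class is numerically trivial and whose boundary coefficients lie in $I\cup\{1-\epsilon_k\}$. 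The values $1-\epsilon_k$ strictly increase to $1$ without stabilising, which contradicts the global ACC.

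For (ii), the central tool is plt adjunction:
\[
(K_X+D)|_D = K_D + \operatorname{Diff}_D,
\]
where $\operatorname{Diff}_D\ge 0$ is a $\mathbb{Q}$-divisor supported precisely on the codimension-one locus of $D$ along which $X$ fails to be smooth, with coefficients of the form $1-1/m$ for positive integers $m$ equal to the local index of $K_X+D$. By (i) each such $m$ is bounded by some $M=M(n,I)$, so $(D,\operatorname{Diff}_D)$ becomes a two-dimensional $\epsilon'$-klt log CY pair whose coefficients lie in the finite standard set $\{1-1/m:1\le m\le M\}$, and $K_D+\operatorname{Diff}_D\equiv 0$.

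I would then split into three cases. First, if $\operatorname{Diff}_D\ne 0$, then $(D,\operatorname{Diff}_D)$ is a two-dimensional $\epsilon'$-klt log CY pair with nonzero standard-coefficient boundary, so $D$ belongs to a bounded family by Alexeev's boundedness theorem for log CY surface pairs. Second, if $\operatorname{Diff}_D=0$ but $D$ carries at least one strictly klt (non-Du Val) singularity, then $D$ is an $\epsilon'$-klt surface with $K_D\equiv 0$ of bounded global Cartier index; such surfaces (log Enriques of bounded index) form a bounded family. Finally, if $\operatorname{Diff}_D=0$ and $D$ is Du Val, we are in the exceptional case: the vanishing of the different is exactly the condition that $X$ is smooth in codimension two around $D$, and adjunction degenerates to $(K_X+D)|_D=K_D$ as claimed.

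The main obstacle is the bookkeeping around the second case: one must verify, using the $\epsilon$-plt-ness from (i), that the Cartier index of $K_D$ is bounded even when $\operatorname{Diff}_D=0$, and then cite the appropriate boundedness theorem for log Enriques surfaces of bounded index. The first case is a direct application of Alexeev's theorem, and the third is pure adjunction. The technical crux is therefore to extract from $\epsilon$-plt-ness of $(X,D)$ both the bound on the indices $m$ appearing in $\operatorname{Diff}_D$ and the index bound needed to access two-dimensional boundedness when $\operatorname{Diff}_D$ vanishes.
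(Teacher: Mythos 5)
Your proposal is correct and follows essentially the same route as the paper: part (i) is the identical global-ACC contradiction argument, and part (ii) is the same adjunction-via-the-different case split, with the nonzero-different and non-Du-Val cases both handled by Alexeev's boundedness theorem for $\epsilon$-lc surface log CY pairs and the Du Val case identified as the exception via the local computation of the different. The only cosmetic difference is that you bound the coefficients of $\operatorname{Diff}_D$ using the $\epsilon$-plt-ness from (i) and phrase the second case through an index bound for log Enriques surfaces, whereas the paper bounds the coefficients by a second application of the global ACC and simply cites Alexeev directly; neither variation changes the substance.
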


\begin{proof}
(i) This can be shown by the same argument as \cite[Corollary 2.9]{Cerbo:aa} (In fact, (i) follows from \cite[Lemma 2.48]{MR3997127}). 
Suppose that there exists a plt CY pair $(X_n, D_n)$ which is $\epsilon_n$-plt for some $\epsilon_n >0$ such that $(\epsilon_n )_n$ is a decreasing sequence and 
 $\lim_{n \rightarrow \infty} \epsilon_n =0$. 
Then there is an extraction $\tilde{X}_n \rightarrow X_n$ of a divisor $E_n$ with $a(E_n; X_n, D_n) = -1+ \epsilon_n$ so that $(\tilde{X}_n, \tilde{D}_n + (1- \epsilon_n)E_n)$ 
satisfies the assumption of the global ACC \cite[Theorem 1.5]{MR3224718} since $I \cup \{1- \epsilon_n \mid n \in \bN \}$ is a DCC set. Thus 
$\{1- \epsilon_n \mid n \in \bN \}$ is a finite set and this is a contradiction. 

(ii) By the adjunction using the different, we have an equality
\[
K_X+D|_D = K_D + \sum_{i=1}^l b_i B_i
\]
as $\bQ$-divisors for some prime divisors $B_1, \ldots, B_l$. 
Note that $b_i$ belongs to some finite set $I_0$ by the global ACC \cite[Theorem 1.5]{MR3224718} 
since $b_i$ belongs to a DCC set $\{1- \frac{1}{n} \mid n \in \bN \}$. 
Suppose that $b_i \neq 0$ for some $i$. Then we see that $(D, \sum b_i B_i)$ is $\epsilon$-lc for some $\epsilon$ independent of $X$.  By \cite[Theorem 6.9]{MR1298994}, we see that $D$ belongs to a bounded family. 

Hence the problem is reduced to the case $K_X +D|_D = K_D$. This implies that $X$ is smooth at all codimension 1 points of $D$ by the local computation of the different 
(cf. \cite[Proposition 4.5 (1)]{MR3057950}). 
Thus we see that $K_D \equiv 0$. 
Such surfaces are bounded except when $D$ has only Du Val singularities by \cite[Theorem 6.9]{MR1298994}. 
\end{proof}

If a plt CY pair $(X,D)$ admits a del Pezzo fibration $X \rightarrow C$ over a curve, 
then $D$ belongs to a bounded family as follows.  
(Note that $C$ is either $\bP^1$ or an elliptic curve by the canonical bundle formula. )

\begin{prop}\label{prop:delpezzofibbdd}
Let $(X, D)$ be a projective $\bQ$-Gorenstein $3$-fold plt CY pair 
with a fiber space $\phi \colon X \rightarrow C$ over a smooth curve $C$ 
such that $D$ is irreducible, reduced and  $\phi$-ample.  

Then there exist a positive integer $N$ and an ample line bundle $\cH$ on $D$ 
such that $N$ is independent of $X$ and $\cH^2 \le N$, thus such $D$'s form a bounded family. 
\end{prop}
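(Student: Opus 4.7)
The strategy is to exploit the del Pezzo fibration structure forced on $\phi \colon X \to C$ by the $\phi$-ampleness of $D \sim_{\bQ} -K_X$, then combine this with the reduction in Proposition~\ref{prop:differentbounded}(ii). A general fiber $F$ of $\phi$ is smooth of dimension~$2$, with $K_F = K_X|_F$, so $-K_F \sim_{\bQ} D|_F$ is ample. Hence $F$ is a smooth del Pezzo of degree $d := K_F^2 \le 9$, yielding the basic numerical bound $D^2 \cdot F = (D|_F)^2 = d \le 9$. The restriction $\phi_D := \phi|_D \colon D \to C$ is surjective (since $D$ is $\phi$-ample and so cannot be contained in a fiber), and its general fiber $D \cap F$ is an anticanonical curve on the smooth del Pezzo $F$, hence a smooth genus-$1$ curve. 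Thus $D$ is naturally an elliptic surface over $C$, which is isomorphic to $\bP^1$ or an elliptic curve by the canonical bundle formula.

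By Proposition~\ref{prop:differentbounded}(ii), one may further reduce to the exceptional case in which $D$ has only Du Val singularities, $X$ is smooth in codimension $2$ around $D$, and $(K_X + D)|_D = K_D$; combined with $K_X + D \equiv 0$ this forces $K_D \equiv 0$. In the complementary case $D$ already lies in a bounded family. So I am left with $D$ a normal projective surface with Du Val singularities and $K_D \equiv 0$, equipped with an elliptic fibration over $C$.

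To construct $\cH$, I would take $\cH := a(D|_D) + b(F|_D)$ on $D$ with $a, b \in \bZ_{>0}$. Since $F^2 = 0$ on $X$ and $D^2 \cdot F = d$, we obtain
\[
\cH^2 = a^2 D^3 + 2 a b d.
\]
The positivity $\cH \cdot C' > 0$ holds automatically for $C' \subset D$ contained in a fiber of $\phi_D$, since such a $C'$ lies in some $D \cap F$ and $D|_F$ is ample on the del Pezzo $F$. Thus Nakai--Moishezon ampleness of $\cH$ reduces to positivity on multisections $C'$ of $\phi_D$, where $F|_D \cdot C' = \deg(\phi_D|_{C'}) \ge 1$ and one must choose $b$ large enough to dominate any possibly negative value of $D|_D \cdot C' = D \cdot C'$. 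The main obstacle is to make both the required $b$ and the intersection number $D^3 = (D|_D)^2$ bounded universally in $X$. For $b$ I would combine the $\epsilon$-plt bound of Proposition~\ref{prop:differentbounded}(i) with a length-of-extremal-rays type argument for the fibration $\phi$, bounding the ratio $D \cdot C' / \deg(\phi_D|_{C'})$ from below independently of $X$. For $D^3$ I would invoke the classification of minimal surfaces with $K \equiv 0$ admitting a genus-$1$ fibration (K3, Enriques, abelian, or bielliptic), which have bounded Picard rank, together with Hodge index applied to the classes $D|_D$ and $F|_D$. These combined bounds yield $\cH^2 \le N$ universally, and since $K_D \cdot \cH = 0$, Matsusaka's big theorem then gives boundedness of $D$.
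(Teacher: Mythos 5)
Your setup (general fiber a del Pezzo, $\phi_D\colon D\to C$ an elliptic fibration, reduction via Proposition~\ref{prop:differentbounded}(ii) to $D$ Du Val with $K_D\equiv 0$) matches the paper, but the core of your argument has a genuine gap: you propose $\cH=a(D|_D)+b(F|_D)$ with $a,b$ bounded independently of $X$, and neither of the two quantities you need to control is in fact bounded. Example~\ref{eg:DPfib} in the paper is exactly the counterexample: for $X_n\subset \bP_{\bP^1}(\cO\oplus\cO\oplus\cO(2)\oplus\cO(n))$ the nef cone of $X_n$ is spanned by $F$ and $-K_{X_n}+nF$, so the twist needed to make $-K_X|_D+bF_D$ nef grows like $n$ (there is no universal lower bound on $D\cdot C'/\deg(\phi_D|_{C'})$ over horizontal curves, so the length-of-extremal-rays argument you hope for cannot exist), and moreover $D^3=(-K_{X_n})^3=(-K_{X_n}+nF)^3-3n(-K_{X_n})^2\cdot F=6-6n\to-\infty$. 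Bounded Picard rank plus Hodge index does not save $(D|_D)^2$ from below: on a lattice of signature $(1,\rho-1)$ a class can have arbitrarily negative square. So for fixed $a,b$ your $\cH^2=a^2D^3+2abd$ is eventually negative and $\cH$ cannot be ample; the ``main obstacle'' you flag is not a technical point to be patched but the actual difficulty of the proposition.

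The paper circumvents this by \emph{not} fixing the twist universally: it sets $\cL_D=m(-K_X|_D)$ (with $m$ a universal Cartier index for Weil divisors on $D$, a point you also need but do not address) and defines $\alpha$ as the \emph{minimal} integer with $h^0(D,\cL_D+\alpha F_D)\neq 0$. This normalization, though $\alpha$ itself is unbounded, forces $h^0(D,\cL_D+\alpha F_D)\le md$ via restriction to a fiber $F_D$. A fixed-part computation on the minimal resolution then bounds the coefficients of the horizontal $(-2)$-curves by $\delta m$, so $\cL_D+(\alpha+k)F_D$ is ample for $k>2\delta m$; since each further twist by $F_D$ raises $h^0$ by at most $md$, Riemann--Roch and Kodaira vanishing convert the resulting bound on $h^0$ into a bound on $\cH^2$. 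The essential idea your proposal is missing is this replacement of ``bounded twist'' by ``minimal effective twist plus a bounded correction,'' with $h^0$ rather than the twist as the bounded invariant. (Separately, the general fiber need only be an $\epsilon$-lc log del Pezzo, not smooth, so $d\le\delta_\epsilon$ rather than $d\le 9$; this is minor since $\epsilon$ is universal by Proposition~\ref{prop:differentbounded}(i).)
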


\begin{proof} 
%Note that $X$ and $D$ have only canonical singularities by Proposition \ref{prop:pltcanonical}, 
%thus $D$ is a Du Val K3 surface. 
Note first that $(X,D)$ is $\epsilon$-plt by Proposition \ref{prop:differentbounded} (i) for some $\epsilon >0$ 
and the general fiber $X_p$ over $p \in C$ of $\phi$ is an $\epsilon$-lc log del Pezzo surface. 
By Proposition \ref{prop:differentbounded}(ii), it is enough to consider the case where $D$ has only Du Val singularities and $X$ is 
smooth in codimension 2 around $D$. 
By this, the restriction $-K_X|_D$ is determined as a Weil divisor. 

\begin{claim}
There exists a positive integer $m$ such that $m$ is independent of $X$ and $m L$ is a Cartier divisor 
for all Weil divisor $L$ on $D$. 
\end{claim}

\begin{proof}[Proof of Claim]
The claim follows since there are finitely many possibilities for the singularities on $D$ (cf. \cite[(4.8.1)]{MR1941620}). 
Let $\nu_D \colon \tilde{D} \rightarrow D$ be the minimal resolution. If $D$ is singular, then $\tilde{D}$ is either a K3 surface or an Enriques surface.  
Then the number of the $\nu_D$-exceptional $(-2)$-curves is less than $\rho(\tilde{D}) \le 20$ (or $< 10$ if $\tilde{D}$ is Enriques) since the exceptional curves are linearly independent in $\Pic \tilde{D}$. 
%Note also that $K_X$ is Cartier in codimension 2 and $-K_X|_D$ is determined as a Weil divisor on $D$. 
\end{proof}

%\vspace{2mm}

We shall find an ample divisor of the form $m(-K_X+aF)|_D$ for a fiber $F:= \phi^{-1}(p)$. 
The point is that $a$ can be unbounded as in Example \ref{eg:DPfib}, but the degree of the divisor is bounded. 

Let $\phi_D:= \phi|_D \colon D \rightarrow C$ and $F_D:= \phi_D^{-1}(p)$ be its fiber over $p \in C$. 
Then $\phi_D$ is an elliptic fibration since, for a general $p \in C$, we have $F_D \in |{-}K_F|_{\bQ}$ for a log del Pezzo surface $F$ and we check $h^0(F_D, \cO_{F_D}) \simeq \bC$. 
%by the exact sequence 
%\[
%H^0(F, \cO_F) \rightarrow H^0( \cO_{F_D}) \rightarrow H^1(F, \cO_F(-F_D))
%\]
%and $H^1(F, \cO_F(-F_D)) = H^1(F, K_F) \simeq H^1(F, \cO_F)^* =0$.  

Let $\cL_{D}:= m(-K_X|_{D})$ be the restricted divisor which is $\phi_{D}$-ample. Let 
\[
\alpha:= \min \{a \in \mathbb{Z} \mid h^0(D, \cL_D + a F_D) \neq 0 \}. 
\]
Then we have an exact sequence 
\[
0= H^0(D, \cL_D + (\alpha-1)F_D) \rightarrow H^0(D, \cL_D + \alpha F_D) 
\rightarrow H^0(F_D, (\cL_D+ \alpha F_D)|_{F_D}).  
\]
Note that $(\cL_D+ \alpha F_D)|_{F_D} = -mK_X |_{F_D}$ and 
its degree is $m(-K_F^2) =: md$, 
where $F$ is a general fiber of $\phi$ which is an $\epsilon$-lc del Pezzo surface of degree $d$. 
Indeed, we have 
\[
-K_X \cdot F_D = -K_X \cdot D \cdot F = (-K_X)^2 \cdot F = (-K_F)^2 =d. 
\]
Note that $d \le \delta$ for some integer $\delta=\delta_{\epsilon}$ determined by $\epsilon$ (the maximal integer degree of $\epsilon$-lc del Pezzo surfaces. See \cite{MR3158579} for the optimal bound. ).  
Since $F_D$ is an elliptic curve, we have $h^0(F_D, (\cL_D+ \alpha F_D)|_{F_D}) = md$.  
Thus, by the above exact sequence, we see that 
\begin{equation}\label{eq:h0ineq}
h^0(D, \cL_D + \alpha F_D) \le md. 
\end{equation}

%Let $\Delta' \in |L_D + \alpha F_D|$ be a general member. 
%We have an exact sequence 
%\[
%H^0(X, -K_X + \alpha F) \rightarrow H^0(D, L_D + \alpha F_D) \rightarrow 
%H^1(X, \phi^* \cO_{\bP^1}(\alpha)) =0   
%\]
%since we have $R^1 \phi_* \cO_X =0$ and $H^1(\cO_{\bP^1}(\alpha))=0$ for $\alpha >0$. 
%Hence $\Delta'$ comes from a general $D' \in |{-}K_X + \alpha F|$. Since the pair $(X, D')$ is plt, 
%we see that $(D, \Delta')$ is also plt. 
%Thus we see that $(D, \frac{1}{2} \Delta')$ is klt. 
%By considering the length of extremal rational curves on the klt pair $(D, \frac{1}{2}\Delta')$, in order to 
%check the nefness of a divisor $L_{\beta}:= L_D + \beta F_D$ for an integer $\beta$, it is enough to check  
%$L_{\beta} \cdot C \ge 0$ for all curves $C \subset D$ such that $\Delta' \cdot C < 0$
%and $- \frac{1}{2} \Delta' \cdot C \le 2 \dim D =4$. 
%Such curves $C$ satisfy $\phi_D(C) = \mathbb{P}^1$ since $L_D$ is $\phi_D$-ample. 
%Thus we see that $F_D \cdot C \ge 1$ and $L_D + (\alpha +9) F_D$ is strictly nef and ample. 

\begin{claim}\label{claim:amplealpha}
%Let $\cL:= \cL_D + \alpha F_D$. Then 
The Cartier divisor
$\cL_D + (k+\alpha) F_D$ is ample for $k > 2\delta m$. 
\end{claim}

\begin{proof}[Proof of Claim] 
Let $\nu_D \colon \tilde{D} \rightarrow D$ be the minimal resolution of $D$ and 
$F_{\tilde{D}}:= \nu_D^*(F_D)$ be the pull-back, and  
$\phi_{\tilde{D}}:= \nu_D \circ \phi_D \colon \tilde{D} \rightarrow C$ 
be the composition.  

Let $\nu_D^*(\cL_D + \alpha F_D) = M + E$ 
be the decomposition to the mobile part $M$ and fixed part $E$.   
We can write $E = \sum_{i=1}^l a_i C_i$ for some $a_i \ge 0$ and $(-2)$-curves $C_1, \ldots, C_l$ so that 
$C_1, \ldots, C_{l'}$ are $\phi_{\tilde{D}}$-horizontal and 
$C_{l'+1}, \ldots, C_l$ are $\phi_{\tilde{D}}$-vertical. 
%Note that $C_i$'s are $(-2)$-curves by $C_i^2 <0 $ via the Riemann-Roch theorem.  
Note that 
\[
md = \nu_D^*(\cL_D+ \alpha F_D) \cdot F_{\tilde{D}} \ge E \cdot F_{\tilde{D}} = (\sum_{i=1}^l a_i C_i) \cdot F_{\tilde{D}} \ge 
\sum_{i=1}^{l'} a_i 
\]
since $C_i$ is vertical for $i >l'$. 
Hence we obtain $$a_i \le \delta m \ \ (i=1,\ldots, l').$$ 

In order to check $\cL_D + (k+\alpha) F_D$ is nef, it is enough to check  
$$\nu_D^*(\cL_D+(\alpha+k) F_D) \cdot C_i \ge 0$$ for $i=1, \ldots, l'$ 
since $\cL_D$ is $\phi_D$-ample.  
For $k \ge 2\delta m$, we have 
\begin{multline*}
\nu_D^*(\cL_D+(\alpha+k) F_D) \cdot C_i = (M + E + k F_{\tilde{D}}) \cdot C_i \\
\ge (a_i C_i + k F_{\tilde{D}}) \cdot C_i = -2 a_i + k(F_{\tilde{D}} \cdot C_i) \ge -2\delta m + k (F_{\tilde{D}} \cdot C_i) \ge 0.  
\end{multline*}
since $C_i$ is horizontal and $F_{\tilde{D}} \cdot C_i \ge 1$. 
Thus $\cL_D+(\alpha+k) F_D$ is nef for $k \ge 2\delta m$, thus ample when $k > 2\delta m$. 
%since $\nu_D^*(\cL_D+(\alpha+k) F_D)$ is ample for sufficiently large $k$. 
\end{proof}

For a positive integer $\beta$ and a divisor $\cL_{\beta}:= \cL_D + (\alpha+ \beta) F_D$, 
we have an exact sequence 
\[
0 \rightarrow H^0(D, \cL_{\beta}) \rightarrow H^0(D, \cL_{\beta+1}) 
\rightarrow H^0(F_D, \cL_{\beta+1}|_{F_D}). 
\]
By $h^0(F_D, \cL_{\beta+1}|_{F_D}) = h^0(F_D, \cL_D|_{F_D}) = md$ as before,  we have 
\[
h^0(D, \cL_{\beta +1}) \le h^0(D, \cL_{\beta}) +md.
\] 
By this and (\ref{eq:h0ineq}), we obtain 
\[
h^0(D, \cL_{2\delta m +1}) \le md + (2\delta m+1) md = 2\delta m^2d + 2md \le 2m^2 \delta^2 + 2m\delta.
\] 
Since $\cL_{2\delta m+1}$ is ample, we have $h^i(D, \cL_{2\delta m+1}) =0$ for $i=1,2$. 
Since $\cL_{2\delta m+1}$ is Cartier, we obtain 
\[
h^0(D, \cL_{2\delta m+1}) = \chi (D, \cL_{2\delta m +1}) = \chi_D + \frac{(\cL_{2\delta m+1})^2}{2}, 
\] 
where $\chi_D:= \chi(D, \cO_D)=0,1,2$ since $D$ is either a (Du Val) K3 surface, Enriques surface or abelian surface. 
%where $\sigma(D, L_{\alpha+9})$ is the contribution of singularities of $(D, L_{\alpha+9})$.  
%$\sigma(D, L_{\alpha+9})$ is bounded since the singularities of $D$ are bounded as \cite[4.8.1]{MR1941620}. 

Thus we see that $\cL_{2\delta m+1}^2$ is bounded by the constant 
$2(2m^2 \delta^2 + 2m\delta-\chi_D)$ 
and $\cH:= \cL_{2\delta m+1}$ has the required property. 
By \cite[Lemma 3.7 (1)]{MR1298994}, we see that $D$ forms a bounded family. 

\end{proof}

\begin{rem}
When $D$ is an abelian surface, we have the same statement as Claim \ref{claim:amplealpha} for $k >0$ since 
an effective divisor on $D$ is nef. 
\end{rem}

\begin{eg}\label{eg:conicbundle}
There are infinitely many examples of conic bundles with smooth anticanonical members in \cite[Example 20]{MR3743104}. 
Let $\bP:= \bP (\cO_{\bP^2} \oplus \cO_{\bP^2}(3) \oplus \cO_{\bP^2}(c))$ for $c \ge 3$ 
and $X:=X_c \in |\cO_{\bP}(2)|$ be a smooth member. Then $\phi \colon X \rightarrow \bP^2$ is a conic bundle and 
$|{-}K_X|$ contains a smooth member $D$. Since $D$ is also an anticanonical member of $\bP(\cO_{\bP^2} \oplus \cO_{\bP^2}(3))$, 
we see that $D$ is bounded with a polarization $\cO_{\bP}(1)|_{D}$ of degree $18$. 
We see that $\rho(X)=2$ by the Lefschetz type theorem \cite[Theorem 2]{MR2219849} and check that the collection $\{X_c\}_{c=1,2,\ldots}$ is unbounded. 
Indeed, a nef and big divisor on $X$ can be written as 
$$H_{a,b}:=-aK_X+ bF = a(-K_X + cF)+ (b-ca) F,$$ 
where $a, b \in \mathbb{Z}_{>0}$ satisfy $b \ge ca$ and $F:= \phi^* \cO_{\bP^2}(1)$.  
Thus we compute 
$$H_{a,b}^3 \ge (-K_X+cF)^3 = 2 (\cO_{\bP}(1)^4) =  2(c^2 + 3c +9)$$ 
by using $H \cdot (H-3f) \cdot (H-cf)=0$ for $H:= \cO_{\bP}(1)$ and $f:=\pi^* \cO_{\bP^2}(1)$ for $\pi \colon \bP \rightarrow \bP^2$. 
Indeed, since we have $H^3 = (c+3) H^2 \cdot f -3c H \cdot f^2$ and $H^2 \cdot f^2 =1$, we obtain 
\begin{multline*}
H^4 = H (H^3) = H ((c+3) H^2 \cdot f -3c H \cdot f^2) = (c+3) H^3 \cdot f -3c (H^2 f^2) \\ 
= (c+3) ((c+3)H^2 \cdot f - 3c H\cdot f^2)\cdot f - 3c \cdot 1 = (c+3)^2 - 3c = c^2 + 3c +9. 
\end{multline*} 
Hence we see the unboundedness of $X_c$. 

Moreover, we check that the collection $\{X_c \mid c \ge 3 \}$ is birationally unbounded by the same argument as \cite{MR1939018}. 
Indeed, the discriminant curve $B_c \subset \bP^2$ of $\phi_c \colon X_c \rightarrow \bP^2$ has degree $2c +6$ as \cite[Example 20]{MR3743104}, thus $4K_{\bP^2} + B_c$ is effective when $c \ge 3$. Hence the conic bundle $\phi_c \colon X_c \rightarrow \bP^2$ is birationally rigid (cf. \cite[Theorem 4.2]{MR1798984}). Then we can use the argument in \cite[Section 3]{MR1939018} to show that $\{X_c \mid c \ge 3 \}$ is birationally unbounded. 

\end{eg}

\begin{eg}\label{eg:DPfib}
There also exist infinitely many examples of del Pezzo fibrations $X \rightarrow \bP^1$ such that $X$ is smooth and $|{-}K_X|$ contains a smooth member. 
Let $$X:= X_n \subset \bP:= \bP_{\bP^1}(\cO\oplus \cO \oplus \cO(2) \oplus \cO(n))$$ be a smooth member of $|\cO_{\bP}(3)|$. 
Then the induced projection $\phi \colon X \rightarrow \bP^1$ is a del Pezzo fibration and 
$|{-}K_X| = |\cO_{\bP}(1) \otimes \phi^* \cO(-n)|$ contains a smooth member $S$. 
We see that $S$ is isomorphic to an anticanonical member of $\bP_{\bP^1}(\cO \oplus \cO \oplus \cO(2))$ 
and has a polarization of the degree independent of $n$. However, the collection $\{ X_n\}_{n \in \bN}$ is not  bounded. 
Indeed, we see that $\Pic X = \bZ (-K_X) \oplus \bZ (F)$ for $F:= \phi^* \cO_{\bP^1}(1)$ as above, and 
a nef and big line bundle 
$$G_{a,b}:= a(-K_X) + b F = a(-K_X + nF) + (b-na)F$$ 
should satisfy $b \ge na$. 
Thus we see the unboundedness of $X_n$ by  computing 
\[
G_{a,b}^3 \ge (-K_X + nF)^3 %= 3H^3 (H-nF) 
= 3n +6
\]
since $0= H^2 \cdot (H-2f) (H-nf) =H^2(H^2-(n+2)H\cdot f +2n f^2) = H^4 - (n+2)H^3 \cdot f = H^4-(n+2)$, where 
$H:= \cO_{\bP}(1)$ and $f$ is the fiber class. 
%More precisely, $X_n$ forms a birationally unbounded family since they are birationally rigid when $n \gg 0$ 
%(\xr{when $(-K_X)^2$ is not effective and $X_n$ is sufficiently general?}). 

For an elliptic curve $C$ and a positive integer $d$, consider $\bP_C:= \bP(\cO_C \oplus \cO_C \oplus \cO_C \oplus \cO_C(dP) )$ and a smooth member $X_d \in |\cO_{\bP_C}(3)|$. 
Then $X_d \rightarrow C$ is a del Pezzo fibration and $S_d \in | {-}K_{X_d}|$ is an abelian surface with a bounded polarization. 
We check the unboundedness of $X_d$ by a similar calculation as above.  
\end{eg}

The following implies Theorem \ref{thm:K+D=0case}. 

\begin{thm}\label{thm:Kequiv0case}
Let $(X,D)$ be a projective $3$-fold plt CY pair such that $D$ is irreducible and reduced. 
Then $D$ is birationally bounded unless all of the following hold: 
\begin{enumerate}
\item $K_X+D \not\sim 0$, but $2(K_X+D) \sim 0$. 
\item $X$ is birational to a conic bundle $Y \rightarrow S$ such that 
 $S$ is either a Du Val K3 surface or an abelian surface. 
 \item For the strict transform $D_Y \subset Y$ of $D$, the induced morphism $D_Y \rightarrow S$ is \'{e}tale in codimension 1
\end{enumerate}
In particular, Theorem \ref{thm:K+D=0case} holds. 
\end{thm}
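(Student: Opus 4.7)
I would run a $K_X$-MMP after first passing to a small $\bQ$-factorialization of $X$ (which exists via BCHM applied to the klt pair $(X,(1-\epsilon)D)$ and is an isomorphism in codimension one, so does not affect $D$ birationally). At each step Proposition \ref{prop:pltpreserve} preserves the plt CY property, and $D$ is never contracted: a divisorial contraction of $D$ would produce a curve $C\subset D$ in the fibre with $D\cdot C<0$ and $K_X\cdot C<0$, forcing $(K_X+D)\cdot C<0$, contrary to $K_X+D\equiv 0$. Since $-K_X\equiv D$ is nonzero effective, the MMP terminates in a Mori fibre space $\phi\colon X'\to Z$ with $\dim Z\in\{0,1,2\}$; write $D'$ for the strict transform of $D$. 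If $\dim Z=0$, then $X'$ is a $\bQ$-Fano threefold with $\rho=1$ and is $\epsilon$-lc by Proposition \ref{prop:differentbounded}(i), so Birkar's boundedness for $\epsilon$-lc Fano varieties bounds $X'$, and hence $D'\sim_{\bQ}-K_{X'}$. If $\dim Z=1$, then $D'\equiv_\phi-K_{X'}$ is $\phi$-ample (as $\rho(X'/Z)=1$), and Proposition \ref{prop:delpezzofibbdd} applies. Both cases yield birational boundedness of $D$.

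The essential case is $\dim Z=2$: $\phi\colon X'\to S$ is a conic bundle over a normal surface, and on a general fibre $F\cong\bP^1$ the intersection $D'\cdot F=-K_{X'}\cdot F=2$ shows $\tau:=\phi|_{D'}\colon D'\to S$ is generically of degree two. By Proposition \ref{prop:differentbounded}(ii) I may assume its exceptional case: $D'$ has only Du Val singularities, $X'$ is smooth in codimension two around $D'$, and $K_{D'}\equiv 0$, so the minimal resolution of $D'$ is K3, Enriques, bielliptic, or abelian. Enriques and bielliptic surfaces form bounded families, so I may further assume $D'$ is birational to a K3 or abelian surface. Normality of $D'$ together with Zariski's Main Theorem identify the Stein factorization of $\tau$ with $\tau$ itself, realizing $\tau$ as a Galois degree-two cover with involution $\iota$. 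Letting $R\subset D'$ be the divisorial fixed locus of $\iota$ and $B=\tau_*R$ the branch divisor on $S$, Riemann--Hurwitz yields $K_{D'}\equiv\tau^*(K_S+B/2)$, hence $K_S+B/2\equiv 0$.

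If $B\ne 0$, the log Calabi--Yau surface pair $(S,B/2)$ is klt via the canonical bundle formula on $(X',D')$ (which controls the discriminant coefficients through ACC for log canonical thresholds), so by standard boundedness results for log Calabi--Yau surface pairs with coefficients in $\{1/2\}$ the pair is bounded, and the cyclic double cover $D'=\Spec(\cO_S\oplus L^{-1})$ with $L^{\otimes 2}\cong\cO_S(B)$ is also bounded thanks to finiteness of $2$-torsion in $\Pic$ on a bounded base, giving birational boundedness of $D$. Hence I may assume $B=0$, which is exactly condition (3), and then $K_S\equiv 0$; arguing as above, the Enriques and bielliptic cases for $S$ are bounded, leaving $S$ Du Val K3 or abelian, which is condition (2). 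For condition (1), if $K_{X'}+D'\sim 0$ then Proposition \ref{prop:pltcanonical} forces $K_{D'}\sim 0$ exactly, so $\iota$ acts trivially on $H^0(K_{D'})$ (a symplectic/Nikulin involution), and K3 or abelian surfaces admitting such an involution with fixed numerical invariants form a bounded family---contradicting the unboundedness of $D$. Thus $K_{X'}+D'\not\sim 0$, while $2(K_{X'}+D')\sim 0$ comes from the $2$-torsion line bundle on $S$ classifying $\tau$.

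\textbf{Main obstacle.} The crux is the conic bundle case: in particular (a) transferring boundedness of the log Calabi--Yau surface pair $(S,B/2)$ to a bounded polarization on the double cover $D'$ (requiring finiteness of the $2$-torsion in $\Pic$ of the bounded base and careful handling of the singularities of $S$), and (b) pinning down that the torsion order of $K_X+D$ in condition (1) is exactly two. The latter requires a careful combination of the canonical bundle formula for $\phi$ with the Galois-theoretic structure of the involution $\iota$, and a precise comparison of the symplectic/anti-symplectic action on $H^0(K_{D'})$ with the resulting torsion element in $\Pic(X')$.
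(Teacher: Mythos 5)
Your overall reduction is the same as the paper's: small $\bQ$-factorialization, $K_X$-MMP preserving the plt CY property (Proposition \ref{prop:pltpreserve}), trichotomy on the base of the resulting Mori fibre space, with the Fano case handled by $\epsilon$-lc boundedness, the del Pezzo fibration case by Proposition \ref{prop:delpezzofibbdd}, and the conic bundle case split according to whether the degree-two map $D'\to S$ is branched along a divisor or \'{e}tale in codimension one. Up to and including the identification of conditions (2) and (3), your argument matches the paper.

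The genuine gap is in your treatment of condition (1), and it is not merely a technical one. You argue that if $K_{X'}+D'\sim 0$ then $\iota$ is symplectic and that ``K3 or abelian surfaces admitting such an involution with fixed numerical invariants form a bounded family,'' deriving a contradiction. This is false, and the paper itself is a witness: Example \ref{eg:NikulinAbel} (citing \cite[Proposition 2.3]{MR2274533}) notes that K3 surfaces with Nikulin involutions occur in infinitely many components of moduli and are expected to be birationally \emph{unbounded} --- these are precisely the exceptional examples the theorem is carving out. Moreover, the implication you use is vacuous as a dichotomy: once $S$ is assumed to be a Du Val K3 or abelian surface (i.e.\ $K_S\sim 0$), the involution $\iota$ on $D'$ is automatically symplectic whether or not $K_{X'}+D'\sim 0$, since the action of $\iota$ on $H^0(K_{D'})$ is detected by $K_S$, not by the torsion order of $K_{X'}+D'$. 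The paper's actual argument for (1) is cohomological: assuming $K_{X'}+D'\sim 0$, the adjunction sequence $0\to\cO_{X'}(K_{X'})\to\cO_{X'}(K_{X'}+D')\to\cO_{D'}(K_{D'})\to 0$ together with surjectivity of $H^0(X',K_{X'}+D')\to H^0(D',K_{D'})$ yields an injection $H^1(X',K_{X'})\hookrightarrow H^1(X',\cO_{X'})$ modulo the image of $\alpha$; Serre duality and $R^i\phi_*\cO_{X'}=0$ give $H^1(X',K_{X'})\simeq H^2(S,\cO_S)^*\simeq\bC$ and $H^1(X',\cO_{X'})\simeq H^1(S,\cO_S)$, which is zero for $S$ a K3 and maps injectively into $H^1(D',\cO_{D'})$ via the \'{e}tale pullback for $S$ abelian --- a contradiction either way. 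For the second half of (1), which you explicitly leave open, the paper takes the index-one cyclic cover $\Pi\colon X''\to X'$ associated to the minimal $m$ with $m(K_{X'}+D')\sim 0$, observes that $\Pi^{-1}(D')$ splits into $m$ disjoint copies of $D'$ because $K_{D'}\sim 0$, and concludes $m=2$ from the connectedness statement \cite[Proposition 4.37 (3)]{MR3057950}. Without these two arguments your proof establishes birational boundedness outside conditions (2) and (3) only, not the full statement.
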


\begin{proof}
By taking a small $\bQ$-factorial modification (cf. \cite[Corollary 1.37]{MR3057950}), we may assume that $X$ is $\bQ$-factorial.  

Let $\phi \colon X \dashrightarrow X_m$ be a birational map induced by a $K_X$-MMP and $\phi_D \colon D \dashrightarrow D_m$ be the birational map induced by $\phi$. 
We also have a Mori fiber space $\phi_m \colon X_m \rightarrow S$. Note that $(X_m, D_m)$ is also a  plt CY pair by Proposition \ref{prop:pltpreserve}. 
It is enough to consider the case where $D_m$ has only Du Val singularities by Proposition \ref{prop:differentbounded}(ii). 
The problem is to bound such $D_m$. 

Consider the case $\dim S =0$. Then $X_m$ is a $\epsilon$-lc Fano 3-fold for some $\epsilon >0$ by Proposition \ref{prop:differentbounded}, thus it is bounded by \cite[Theorem 1.1]{MR4224714} and $D_m$ is also bounded. 

Next consider the case $\dim S =1$. Then $X_m \rightarrow S$ is a del Pezzo fibration and $D_m$ is bounded by Proposition \ref{prop:delpezzofibbdd}. 

Next consider the case where $\dim S =2$ and  the induced morphism $ D_m \rightarrow S$ is of degree $2$ and branched along a curve. 
Then $(S,\frac{1}{2}R)$ is a $\frac{1}{2}$-lc CY pair (cf. \cite[Proposition 5.20]{MR1658959}), where $R \in | {-}2K_S|$ is 
the branch divisor of the double cover $\pi_m \colon D_m \rightarrow S$ (or its Stein factorization). 
Then $(S,\frac{1}{2}R)$ is log bounded by \cite[Theorem 6.9]{MR1298994}. 
Thus $D_m$ is also bounded since it is a crepant modification of the double cover of $S$ branched along $R$ 
(For a polarization $H$ on $S$ with the bounded degree, $\pi_m^*H$ gives a quasi-polarization on $D_m$ with the bounded degree).  

Finally consider the case where $\dim S =2$ and $\pi_m \colon D_m \rightarrow S$ is \'{e}tale in codimension 1. 
Then we see that $K_S \equiv 0$. Thus $S$ and $D_m$ are bounded unless $S$ has only Du Val singularities by \cite[Theorem 6.8]{MR1298994}. 
Since we are interested in the birational boundedness of $D$, it is enough to assume $K_S \sim 0$, that is, $S$ is either a Du Val K3 surface or an abelian surface
since Enriques surfaces and bielliptic surfaces are bounded. 
Hence the problem is reduced to the following claim.   
%If $S$ is an Enriques surface, then $\pi_m$ is \'{e}tale and $D_m$ is a K3 surface with a polarization of degree $4$.  
%Thus $D_m$ is bounded.  

\begin{claim}
In the above setting, assume that $S$ is a Du Val K3 surface or an abelian surface. Then we have the following. 
\begin{enumerate}
\item[(i)] $K_{X_m} + D_m \not\sim 0$. 
\item[(ii)] $2(K_{X_m} +D_m) \sim 0$. 
\end{enumerate}
\end{claim}

\begin{proof}[Proof of Claim]
Let $X:= X_m$ and $D:= D_m$ with a conic bundle $\phi \colon X \rightarrow S$. Note that $\phi_D:= \phi|_D$ is \'{e}tale in codimension 1 and, if $S$ is an abelian surface, then $\phi_D$ is \'{e}tale by the purity of the branch locus. 

\noindent(i) Suppose that $K_X + D \sim 0$ and we shall find a contradiction.  Since we have the usual adjunction $K_X+D|_D = K_D$ and $\cO_X(K_X)$ is $S_2$, we obtain an exact sequence 
\[
0 \rightarrow \cO_X(K_X) \rightarrow \cO_X(K_X +D) \rightarrow \cO_D(K_D) \rightarrow 0. 
\] 
Since the restriction $H^0(X, K_X +D) \rightarrow H^0(D, K_D)$ is surjective, 
we obtain the exact sequence 
\[
0 \rightarrow H^1(X, K_X) \rightarrow H^1(X, \cO_X) \xrightarrow{\alpha} H^1(D, \cO_D). 
\]
By the Serre duality and the Leray spectral sequence, we obtain  
\[
H^1(X, K_X) \simeq H^2(X, \cO_X)^* \simeq H^2(S, \cO_S)^* \simeq \bC
\] 
and $H^1(X, \cO_X) \simeq H^1(S, \cO_S)$. Note that $R^i \phi_* \cO_X =0$ by the Kawamata-Viehweg vanishing since $-K_X$ is $\phi$-ample. If $S$ is a Du Val K3 surface, then we have $H^1(S, \cO_S) =0$ and this contradicts the above exact sequence. 
If $S$ is an abelian surface, then we check that $\alpha$ in the exact sequence is injective. Indeed, $\alpha$ can be regarded as $\phi_D^* \colon H^1(S, \cO_S) \rightarrow H^1(D, \cO_D)$ and this is an isomorphism since $\phi_D$ is \'{e}tale. This again contradicts the above exact sequence. 
Thus we see that $K_X +D$ is not trivial. 

\vspace{2mm}

\noindent(ii) %To show $2(K_X+D) \sim 0$, we shall show $H^0(X, 2(K_X+D)) \simeq \bC$. 
Let $m \in \bZ_{>1}$ be a minimal integer such that $m(K_X+D) \sim 0$ and let $\Pi \colon X':= \Spec \bigoplus_{i=0}^{m-1} \cO_X(i(K_X+D)) \rightarrow X$ be the cyclic cover defined by an isomorphism $\cO_X(m(K_X + D)) \simeq \cO_X$. Then $D':= \Pi^{-1}(D)$ satisfies that $D' \simeq \Spec \bigoplus_{i=0}^{m-1} \cO_X(i(K_X+D)|_D) \simeq \Spec \bigoplus_{i=0}^{m-1} \cO_X(i K_D)$. 
By $K_D \sim 0$, we see that $D'$ is a disjoint union of $m$ copies of $D$. By $K_{X'} + D' \sim 0$ and \cite[Proposition 4.37 (3)]{MR3057950}, we see that $m=2$, that is, $2(K_X+D) \sim 0$. 
\end{proof}

This finishes the proof of Theorem \ref{thm:Kequiv0case}. 
\end{proof}	

The case where $D_m \rightarrow S$ is \'{e}tale really occurs as follows. 
We also have examples where $D_m$ can be any abelian surface, thus gives examples of birationally unbounded $D$ in Theorem \ref{thm:Kequiv0case} by Claim \ref{claim:unbddK3}.  

\begin{eg}\label{eg:Enriques}
Let $S$ be an Enriques surface and $X:= \bP_S(\cO_S \oplus \omega_S)$. 
Then the linear system $|{-}K_X| = |\cO_{\bP}(2)|$ is free. 
Indeed, it contains two members $2 \sigma_0, 2 \sigma_{\infty}$ with disjoint support, 
where  $\sigma_0, \sigma_{\infty}$ are the sections corresponding to two surjections $\cO \oplus \omega_S \twoheadrightarrow \cO, 
\cO \oplus \omega_S \twoheadrightarrow \omega_S$. %Then we see that $D$ is a K3 surface. 
Then we see that a general member $D \in |\cO_{\bP}(2)|$ is irreducible since we have an exact sequence 
\[
H^0( \cO_{\bP}) \rightarrow H^0(\cO_D) \rightarrow H^1(\cO_{\bP}(-2))
\]
and obtain $H^0(D, \cO_D) \simeq \bC$ by 
$$H^1(\cO_{\bP}(-2)) = H^1(\omega_{\bP}) \simeq H^2(\cO_{\bP})^{*} \simeq H^2( \cO_S) =0. $$ 
Then, since there is an \'{e}tale double cover $D \rightarrow S$, we see that $D$ is a K3 surface.  
It is well-known that Enriques surface has a polarization $H$ such that $H^2 =2$, thus Enriques surfaces form a bounded family. 

We can construct a similar example from any abelian surface $A$ and its translation $\tau \in \Aut A$ by a 2-torsion point on $A$. 
Note that the quotient morphism $q \colon A \rightarrow A/\tau$ is \'{e}tale and $\bar{A}:= A/\tau$ is also an abelian surface. 
Let $Y:= \bP_{\bar{A}}(\cO \oplus \cL)$, where $q_* \cO_A \simeq \cO_{\bar{A}} \oplus \cL$. Then $|\cO_{\bP}(2)|$ is free and contains a smooth member $\Delta \simeq A$ as above. Note that $-K_Y = \cO_{\bP}(2) \otimes \pi^* \cL$, thus $-K_Y \equiv \cO_{\bP}(2)$ but $-K_Y \not\sim \cO_{\bP}(2)$. Note also that $A$ forms a birationally unbounded family by Claim \ref{claim:unbddK3}. 
\end{eg}

The following gives unbounded examples in the case where $D_m \rightarrow S$ is \'{e}tale in codimension $1$ and $S$ is singular. 

\begin{eg}\label{eg:NikulinAbel}
Let $D$ be a smooth K3 surface with a Nikulin involution $\iota \in \Aut D$, that is, $\iota$ is a symplectic involution 
so that $S:= D / \iota$ is a Du Val K3 surface with 8 $A_1$-singularities $p_1, \ldots, p_8$. 
There are infinitely many components of the moduli space which parametrize K3 surfaces with Nikulin involutions as in \cite[Proposition 2.3]{MR2274533}. 
Let $\pi \colon D \rightarrow S$ be the quotient morphism and $S':= S \setminus \Sing S$ be the smooth part.  
Note that $\pi_* \cO_D \simeq \cO_S \oplus \cL$ for some reflexive sheaf $\cL$ of rank 1 such that 
$\cL^{[2]}:=(\cL^{\otimes 2})^{**} \simeq \cO_S$. 

We can construct a $\bQ$-conic bundle 
$$\bP:= \bP_S( j_* (\Sym (\cO_{S'} \oplus \cL|_{S'})) \rightarrow S,$$
where 
$j \colon S' \rightarrow S$ is an open immersion and $\Sym$ is the symmetric algebra.
We check that $\bP$ has at most $1/2(1,1,1)$-singularities by local computation. 
We also check that $|\cO_{\bP}(2)|$ is a free linear system and contains a smooth irreducible member $\Delta$ 
as in Example \ref{eg:Enriques}.  
We see that $\Delta$ is a K3 surface which can be isomorphic to the original $D$. 
Then the pair $(\bP, \Delta)$ is a plt CY pair such that $K_{\bP}+ \Delta$ is $2$-torsion. We expect that the set of $D$ with Nikulin involutions form a birationally unbounded family.  

We can do the same construction starting from any abelian surface $A$ and its $(-1)$-involution $\iota \in \Aut A$. 
That is, we can construct a $\bQ$-conic bundle $X \rightarrow T:= A/\iota$ with $\Delta \subset X$ so that 
$(X, \Delta)$ is plt, $K_X + \Delta \equiv 0$ and $\Delta \simeq A$ is an abelian surface. 
%Hence the boundedness as in Proposition \ref{prop:abeliancase} does not hold if we only assume that $K_X +D \equiv 0$ in the statement. (\xr{Check later})
\end{eg}
	
\begin{rem}
%Although $D$ is bounded in the statement, $X$ do not necessarily form a bounded family (Example \ref{eg:DPfib}).
Without the assumption that $D$ is irreducible, the statement is false. 
For example, consider the product $X=S \times \bP^1$ of a K3 surface (or an abelian surface) $S$ and $\bP^1$. 
Note that families of K3 surfaces and abelian surfaces are algebraically unbounded although they are analytically bounded. 

We can also show that the collection of projective K3 surfaces (or abelian surfaces) is birationally unbounded as follows. (This may be well-known, but we include the explanation for the possible convenience of the reader. )

%(\xr{Birationally unbounded?})

\begin{claim}\label{claim:unbddK3}
Let $\cC:= \{S_d \mid d \in \bZ_{>0} \}$ be the collection of smooth  projective K3 surfaces (or abelian surfaces), where $S_d$ satisfies $\Pic S_d = \bZ \cdot H_d$ and $H_d$ is an ample line bundle of degree $H_d^2 = 2d$.  Then $\cC$ is birationally unbounded. 
\end{claim}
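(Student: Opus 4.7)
Suppose for contradiction that $\cC$ is birationally bounded, so there is a projective morphism $f\colon \cX \to T$ with $T$ a $\bC$-scheme of finite type and, for each $d \in \bZ_{>0}$, a point $t_d \in T(\bC)$ such that $\cX_{t_d}$ is birational to $S_d$. My strategy is to pass to a relative minimal model of $\cX/T$ and extract from it a uniformly bounded ample polarization on each $S_d$, which will contradict $H_d^2 = 2d \to \infty$.

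First I would normalize the setup: by stratifying $T$ into locally closed subsets, resolving the components of $\cX$ dominating each stratum, and passing to an irreducible stratum containing infinitely many of the $t_d$, I may assume that $T$ is smooth and irreducible and that $\cX \to T$ is a smooth projective family of surfaces with infinitely many $S_d$ still birational to fibers. Since Kodaira dimension is a birational invariant of smooth projective varieties and $\kappa(S_d) = 0$, each relevant $\cX_{t_d}$ has $\kappa = 0$, so the general fiber of $\cX \to T$ has $\kappa \geq 0$.

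The central step is to run a relative MMP on $\cX \to T$: using the relative Hilbert scheme of curves of $\cH$-degree $1$ for a fixed relative polarization $\cH$, iteratively identify a flat family of fiberwise $(-1)$-curves over a stratum of $T$ and contract it, stratifying $T$ further as necessary. Since the MMP terminates on each surface fiber of nonnegative Kodaira dimension, after finitely many steps one obtains a projective family $g\colon \cY \to T'$, with $T'$ still of finite type, whose fibers are the unique minimal models of the corresponding fibers of $\cX \to T$. Because a K3 or abelian surface is already its own minimal model, $\cY_{t_d} \cong S_d$ for every $d$ in an infinite subset. This is the main obstacle of the proof: one must verify that fiberwise contractions of $(-1)$-curves can be performed algebraically within a family so as to preserve finite-typeness of the base, but for smooth surface fibrations this follows from standard results on relative minimal models.

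Finally, fix a locally closed embedding $\cY \hookrightarrow \bP^N \times T'$ and let $\cL := \cO_{\bP^N}(1)|_{\cY}$. The self-intersection $(\cL|_{\cY_t})^2$ equals the degree of $\cY_t$ under this embedding, and it is uniformly bounded by some $A \in \bZ_{>0}$, since the Hilbert polynomial of $\cY_t$ is constant on each of the finitely many irreducible components of $T'$. For $\cY_{t_d} \cong S_d$, the ample line bundle $\cL|_{S_d}$ lies in $\Pic S_d = \bZ \cdot H_d$ and hence equals $m_d H_d$ for some integer $m_d \geq 1$, so
\[
2d \leq 2m_d^2 d = (m_d H_d)^2 = (\cL|_{S_d})^2 \leq A.
\]
This bounds $d$, contradicting the fact that infinitely many values of $d$ occur and establishing that $\cC$ is birationally unbounded.
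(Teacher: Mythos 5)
Your proposal is correct and follows essentially the same route as the paper: assume birational boundedness, reduce to a smooth family over an irreducible base containing infinitely many of the $t_d$, run a relative MMP so that the fibers become the (unique) minimal models and hence isomorphic to the $S_d$, and then derive a contradiction by comparing the bounded degree of a relative polarization with $\Pic S_d = \bZ\cdot H_d$ and $H_d^2 = 2d \to \infty$. The only cosmetic difference is that you describe the relative MMP explicitly via fiberwise $(-1)$-curve contractions and use the Hilbert polynomial of an embedding where the paper simply fixes a $\phi$-ample $\cH$ and uses constancy of $(\cH_t)^2$.
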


\begin{proof}[Proof of Claim]
The argument is similar as that of \cite[Section 3]{MR1939018}.

Suppose that $\cC$ is birationally bounded. 
Then there exists a projective morphism of algebraic schemes 
$\phi \colon \cS \rightarrow T$ such that, for $d \in \bZ_{>0}$, 
there exist $t_d \in T$ and a birational map $\mu_d \colon \cS_{t_d} \dashrightarrow S_d$ from the fiber $\cS_{t_d}:= \phi^{-1}(t_d)$. Let $T':= \{t_d \mid d\in \bZ_{>0} \} \subset T$ 
and $Z:= \overline{T'} \subset T$ be its closure. Then there exists an irreducible component $Z_i \subset Z$ containing infinitely many $t_d$'s. By considering the base change to $Z_i$, we may assume that $T$ is irreducible and that $T' \subset T$ is dense and contains infinitely many $t_d$'s.  

Let $\eta \in T$ be the generic point and $\cS_{\eta}$ be the generic fiber of $\phi$. 
By taking a resolution of $\cS_{\eta}$ and replacing $T$ by an open subset, we may assume that $\phi \colon \cS \rightarrow T$ is a smooth family of projective surfaces with birational maps $\mu_d \colon \cS_{t_d} \dashrightarrow S_d$ for infinitely many $d$. By running a $K_{\cS}$-MMP over $T$, we may assume that $K_{\cS/T}$ is $\phi$-nef, thus $\mu_d$ is an isomorphism for $d$ with $t_d \in T$. 
Let $\cH$ be a $\phi$-ample line bundle on $\cS$ and $M:= (\cH_t)^2 >0$ be its degree. 
We can take $d \gg 0$ such that $2d > M$ and $t_d \in T$. 
Since $\Pic S_d = \bZ H_d \ni \cH_{t_d}$ and $H_d^2 = 2d > \cH_{t_d}^2 =M >0$, 
this is a contradiction. 
Hence we see that $\cC$ is birationally unbounded. 
\end{proof}

%We also check that the collection of projective abelian surfaces is birationally unbounded similarly as Claim \ref{claim:unbddK3}. 

\end{rem}

\begin{rem}
If we only assume that $(X,D)$ is a log canonical pair such that $D$ is irreducible and $K_X+D \sim 0$, then 
such $(X,D)$ forms an unbounded family. For example, we can consider a polarized K3 surface $(S, L)$ of any degree and 
its projective cone $X:= C_p(S,L)$.  
\end{rem}

\begin{rem} 
For any $d >0$, there exists an abelian variety $A$ of dimension $n \ge 2$ with a primitive ample divisor $L$ of type $(1, \ldots, 1, d)$ such that $h^0(A, L) =d$ (and $L^n = n! d$). A general abelian variety of type $(1,1, \ldots, d)$ has the Picard rank $1$. 
Hence abelian varieties of dimension $\ge 2$ are algebraically unbounded (cf. \cite[8.11(1)]{MR2062673}). 

The statement in Theorem \ref{thm:K+D=0case} does not hold when $\dim X \ge 4$. 
Let $X_d:= A_d \times \bP^2$, where $(A_d, L_d)$ is a general abelian variety  
with a primitive polarization $L_d$ of type $(1, \ldots, 1,d)$ as above. 
Then there exists a smooth member $D_d :=A_d \times C \in |{-}K_{X_d}|$ so that $(X_d,D_d)$ is a plt CY pair and $D_d$ is irreducible and reduced,  
where $C \subset \bP^2$ is an elliptic curve. 
Such $D_d$ forms an unbounded family since there is no non-constant map $C \rightarrow A_d$ and $\Pic (A_d \times C) \simeq \Pic A_d \times \Pic C$. 

We can also show such $D_d$ forms a birationally unbounded family by a similar argument as Claim \ref{claim:unbddK3} using the relative MMP guaranteed by \cite[Theorem 1.2]{MR3779687} as follows. %\cite[Theorem 12.4.2]{MR1149195}. 
Suppose that $\{ D_d \mid d \in \bZ_{>0}\}$ is birationally bounded. 
Then, as in Claim \ref{claim:unbddK3}, we can construct a smooth family $\phi \colon \cA \rightarrow T$ over a smooth variety $T$ with 
infinitely many points $t_d \in T$ and a birational map $\mu_d \colon \cA_{t_d} \dashrightarrow D_d$. By \cite[Theorem 1.2]{MR3779687}, we obtain a birational map $\cA \dashrightarrow \cA'$ to a good minimal model $\cA'$ of $\cA$ over $T$ with a morphism $\phi' \colon \cA' \rightarrow T$. Since an abelian variety contains no rational curve and there is no flop on it, we see that $\cA'_{t_d} \simeq D_d$ for $t_d \in T$. Let $\cH'$ be a $\phi'$-ample line bundle on $\cA'$. 
By considering $\cH'|_{\cA'_{t_d}}$ and its pull-back to $A_d$ for sufficiently large $d$, we obtain a contradiction as before.  Hence we obtain the required birational unboundedness.  
\end{rem}

\section{Birational bounded family of Du Val K3 surfaces \\ which are unbounded} 

We consider the following problem in this section. 

\begin{prob}
Let $S$ be a smooth K3 surface with an ample line bundle $L$ with $L^2 =2d$ for a fixed $d >0$. 
Let $S \rightarrow S'$ be a birational morphism onto a normal surface $S'$ (which is a Du Val K3 surface). 
Does there exist an ample line bundle $L'$ on $S'$ with $L'^2 \le N_d$ for some $N_d$ determined by $d$? 
\end{prob}

%This problem has the following counterexample due to Prof. Oguiso. 
The following example in the e-mail from Keiji Oguiso is a counterexample to the problem and shows that a birational bounded family of Du Val K3 surfaces can be unbounded. 

\begin{eg}\label{eg:unbdd}
Let $d, m$ be any positive integers. Let $S$ be a polarized K3 surface of degree $2d$ of Picard number $2$ such that ${\rm Pic}\, (S) = \bZ H \oplus \bZ C$ with intersection form 
$$(H^2) = 2d\,\, ,\,\, (H.C) = m\,\, ,\,\, (C^2) = -2\,\, $$
which is constructed in \cite[Theorem 3]{MR1282199} (The $d=2$ case is treated in \cite{MR771073}). 
We know that $H$ is very ample when $d \ge 2$ and $C$ is a $(-2)$-curve by \cite[Lemma 1.2]{MR1282199}.  

We have a contraction $\pi : S \to T$ of $C$ to the rational double point of type $A_1$. Let $L$ be an ample Cartier divisor on $T$. (Note that the local class group of $A_1$ is $\bZ/2$ so that $2L'$ is Cartier for any Weil divisor $L'$ on $T$). Then 
$$\pi^*L = aH + bC,$$ 
where $a$ and $b$ are integers and moreover $a>0$, as $\pi^*L$ is a nef and big Cartier divisor. Since $\pi$ is the contraction of $C$, it follows that $(\pi^*L.C) = 0$. Hence, by $\pi^*L = aH +bC$, we have 
$$a(H.C) + b(C^2) = 0.$$
Substituting $(H.C) = m$ and $(C^2) = -2$ into the equation above, it follows that 
$$b= \frac{am}{2}.$$ 
Also, from $\pi^*L - bC = aH$ with $(\pi^*L.C) = 0$, $(C^2) = -2$ and $(\pi^*L)^2 = (L^2)$ (as $\pi$ is a birational morphism), we have
$$(L^2) - 2b^2 = (\pi^*L -bC)^2 = a^2(H^2) = 2da^2.$$
Hence, for any ample Cartier divisor on $T$, we have
$$(L^2) = 2da^2 + 2b^2 = a^2(2d + \frac{m^2}{2}) \ge 2d + \frac{m^2}{2}.$$
Since $m$ can be taken any positive integer, it follows that the degree of the polarizations on the birational contractions of polarized K3 surfaces of degree $2d$ is unbounded. Hence  contractions of polarized K3 surfaces of a fixed degree do not necessarily form a bounded projective family. 
\end{eg}

\begin{rem}\label{rem:embedquarticcont}
In this remark, we ask whether the surface $S$ and $T$ in Example \ref{eg:unbdd} can be embedded in a rationally connected 3-fold when $d=2$. 

%We check that $H$ on $S_m:=S$ is very ample when $m >1$. 
%Indeed, if $H$ is not very ample, then there should exist an irreducible curve $E \subset S$ with the arithmetic genus $p_a(E) =1$ by \cite[Theorem 5.2, Theorem 8.1]{MR0364263}, thus $$E^2=0$.
%Such $E$ can not exist when $m>1$ as follows: 
%We can write $E=aH +b C$ for some $a,b \in \bZ$, then $4a^2+ 2mab -2b^2=(aH+bC)^2=0$. 
%We calculate that 
%$$a = \frac{-m\pm \sqrt{m^2+8}}{4} b,$$ 
%thus $\sqrt{m^2+8} \in \bZ$. This is a contradiction when $m >1$. 

We have an embedding $S \subset \bP^3$ as a quartic surface. 
When $m=2l$ is even, we can construct a 3-fold $\bar{X}$ which contains $T$ as an anticanonical hypersurface so that $(\bar{X}, T)$ is a plt CY pair as follows. 

Assume that $m=2l$ is even. %(see Remark \ref{rem:misodd} for the case where $m$ is odd. ). 
By the above consideration, the effective cone $\NEbar(S) \subset \Pic(S) \otimes \bR$ of $S$ is generated by $(-2)$-curves by \cite[Theorem 2]{MR1314742}. 
Hence we can write $\NEbar(S) = \bR_{\ge 0}[C] + \bR_{\ge 0} [ \Gamma]$ for some $(-2)$-curve $\Gamma$. 
Note that $lH-C$ is effective since $(lH-C)^2=-2$ and  $(lH-C)\cdot H = 2l >0$. 
%$(lH-C)\cdot C = 2l^2+2 >0$. 
Note that such classes can be reducible in general. 

We show that  $\Gamma \sim lH-C$ as follows. 
Note that we can write $\Gamma= aH-bC$ for some $a,b \in \bZ_{>0}$.  Since we have 
\[
-2=(aH-bC)^2= 4a^2-2mba-2b^2, 
\]
we obtain $a(2a-mb)=2a^2-mba=b^2-1=(b-1)(b+1)$. 
If $b>1$, then we have $2a-mb=2(a-lb)>0$ and 
$$aH-bC = (a-lb) H +b(lH-C) \in \bR_{>0} C + \bR_{>0} (lH-C) \subset \NEbar(S).$$
Hence $aH-bC$ is not on the boundary of $\NEbar(S)$. 
Thus we see that $b=1$ and $a=l$, that is, $\Gamma \sim lH-C$.

Now let $\mu \colon X \rightarrow \bP^3$ be the blow-up along $\Gamma$. 
Let $E_{\Gamma}:= \mu^{-1}(\Gamma)$ be the exceptional divisor and $\tilde{S} \subset X$ be the strict transform of $S$. Let 
\[
L:= \mu^* \cO_{\bP^3}(l^2+1) - l E_{\Gamma}. 
\] 
We see that the restriction $L|_{\tilde{S}} =(l^2+1)H- l(lH-C) =  H+lC$ is the line bundle which induces the birational contraction $\pi \colon S \rightarrow T$ in Example \ref{eg:unbdd}.   

Now assume that $l >4$. 
We see that $L$ is base point free and induces a birational morphism as follows. 
Note that 
\begin{equation}\label{eq:decomp}
L= \mu^*\cO(1) + l(\mu^*\cO(l) - E_{\Gamma}) 
\end{equation} 
and the linear system $|\mu^* \cO(l) - E_{\Gamma}|$ 
contains $\tilde{S} + S_{l-4}$ for all $S_{l-4} \in |\mu^* \cO(l-4)|$.  
Hence the base locus $\Bs L$ of $|L|$ is contained in $\tilde{S}$. 
Since $L|_{\tilde{S}}$ is base point free, we see that $L$ is nef. 
By (\ref{eq:decomp}), we see that $L$ is big. 
Finally, we have an exact sequence 
\[
H^0(X, L) \rightarrow H^0(\tilde{S}, L|_{\tilde{S}}) \rightarrow H^1(X, L-S)=0
\] 
by the Kawamata-Viehweg vanishing since $\tilde{S} \in |{-}K_X|$, 
$L-\tilde{S} = K_X + L$ and $L$ is nef and big. 
This implies that $|L|$ is base point free and induces a birational contraction $\Phi_{L} \colon X \rightarrow \bar{X}$. 
We see that $\Phi_L({\tilde{S}}) \simeq T$. 

We see that $(\bar{X}, T)$ is a plt CY pair although $\bar{X}$ is not $\bQ$-Gorenstein. 
%\xr{Does it induce an anti-flip $X \dashrightarrow X^{-}$ with $S^{-} \subset X^{-}$ such that $S^{-} \simeq T$?}
\end{rem}

\begin{rem}\label{rem:misodd}
Let $S \subset \bP^3$ be as in Remark \ref{rem:embedquarticcont} for an odd $m \ge 3$. As in Remark \ref{rem:embedquarticcont}, we see that $\NEbar(S)$ is generated by $C$ and another $-2$-curve $\Gamma$. However, it seems difficult to describe $\Gamma$ explicitly. In order to find such a class, we need to find an integer solution $(a,b)$ of the quadratic equation $$4a^2-2mab -2b^2 =-2$$ with $a,b >0$ .  By a computer program in \cite{Binary}, we find solutions for an explicit $m$. For $m = 15$,  the solutions are $(a,b) = (2G, F-15G)$,  where 
\[
F+ G \sqrt{233} = (2144801346/2 + 140510608/2 \sqrt{233})^n \ \ \text{for } n \ge 0. 
\]
\end{rem}

\section{Some results in higher dimensional case}

We consider the following problem in this section. 

\begin{prob}
Let $n>0$ and $X$ be a normal projective rationally connected $n$-fold with an irreducible $D \in |{-}K_X|$ such that $(X,D)$ is a plt pair (and $D$ is a strict CY variety with only canonical singularities). 
Does such $D$ form a birationally bounded family?  
\end{prob}

\begin{rem}
If $\dim X =4$, then $D$ is a CY 3-fold. By taking a small $\bQ$-factorial modification and running $K_X$-MMP as before, 
we may assume that there is a Mori fiber space $\phi \colon X \rightarrow S$ which induces a surjective morphism $\phi_D:= \phi|_D \colon D \rightarrow S$. 
The problem is to bound this $D$. 

If $\dim S=0$, then $X$ is a $\bQ$-Fano 4-fold with canonical singularities 
and it is bounded (cf. \cite{MR4224714}), thus $D$ is also bounded. 
If $\dim S=2$, then $\phi_D \colon D \rightarrow S$ is an elliptic fibration. Indeed, we check this as in the proof of Proposition \ref{prop:delpezzofibbdd} since its general fiber is an anticanonical member of the general fiber of $\phi$ which is a log del Pezzo surface. Hence $D$ is birationally bounded by Gross' theorem \cite{MR1272978}. 
If $\dim S =3$, then $D \rightarrow S$ is a generically 2:1-cover and 
 $D \rightarrow S$ is branched along a divisor $R \subset S$ since $S$ is rationally connected.  Thus $(S, \frac{1}{2}R)$ is a klt CY pair and $R \in |{-}2K_S|$. 
$S$ is birationally bounded by \cite[Theorem 1.6]{MR4191257} and $D$ is also birationally bounded. 

Hence the problem is reduced to the case $\dim S =1$. However, we don't know how to show the boundedness in these cases. 
\end{rem}

Chen Jiang also pointed out the following. 

\begin{prop}\label{prop:logbddnumequiv}
Let $n,m >0$. 
Let $(X,D)$ be a $n$-dimensional reduced plt CY pair such that 
$X$ is of Fano type.  
Assume that there exists a $\bQ$-divisor $B \neq D$ such that $mB$ is integral, 
$K_X+B \equiv 0$ and $(X, B)$ is lc. 

Then the pair $(X,D)$ is log bounded. 
\end{prop}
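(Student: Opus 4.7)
The plan is to use $B$ to perturb the boundary of $(X, D)$ into a klt (or in a degenerate case plt) log Calabi--Yau pair whose coefficients lie in a finite set, and then to invoke boundedness of log CY pairs on Fano type varieties of fixed dimension.

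For a rational $t \in (0, 1/2)$ with $mt \in \bZ$, set $\Delta_t := (1-t)D + tB$. Then
\[
K_X + \Delta_t = (1-t)(K_X+D) + t(K_X+B) \equiv 0,
\]
the coefficients of $\Delta_t$ lie in the finite set $\{(1-t) + tk/m,\ tk/m : 0 \le k \le m\} \subset [0,1]$, and discrepancies convex-combine:
\[
a(E, X, \Delta_t) = (1-t)\, a(E, X, D) + t\, a(E, X, B).
\]
By Proposition \ref{prop:differentbounded}(i), $(X,D)$ is $\epsilon$-plt for some $\epsilon = \epsilon(n)>0$; combined with lc-ness of $(X,B)$ this yields $a(E,X,\Delta_t) > -1 + (1-t)\epsilon$ for every exceptional $E$. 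The only obstruction to klt-ness of $(X, \Delta_t)$ is a prime component $E$ of $D$ with $\mathrm{mult}_E(B) = 1$. If every component of $D$ had this property, one would write $B = D + B'$ with $B' \ge 0$ and $B' \equiv 0$; since $X$ is of Fano type, any effective numerically trivial $\bQ$-divisor vanishes, forcing $B = D$, contrary to hypothesis. In the mixed case, let $D_0$ be the union of the common components (coefficient $1$ in $B$) and set $D_1 := D - D_0$, $B' := B - D_0$; the substitute pair $(X,\, D_0 + (1-s) D_1 + s B')$ for small $s \in (0, 1/2) \cap \tfrac{1}{m}\bZ$ is a plt CY pair with $\lfloor \Delta \rfloor = D_0 \subseteq D$ and coefficients still in a fixed finite set. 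Either way, one obtains a log CY pair $(X, \Delta)$ with $\Delta \geq (1-t)D$ (resp.\ $\geq (1-s)D$), finite-coefficient boundary, and the reduced divisor $D$ recoverable from $\Delta$ as the union of components whose coefficient exceeds $t$ (resp.\ $s$).

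Since $X$ is of Fano type and $\Delta \equiv -K_X$ is big, with coefficients confined to a finite set depending only on $n$ and $m$, the pair $(X, \Delta)$ is log bounded by the boundedness of (plt/klt) log CY pairs on Fano type varieties of fixed dimension with DCC coefficients — a consequence of the BAB theorem \cite[Theorem 1.1]{MR4224714} together with standard complement/boundedness arguments. Log boundedness of $(X, \Delta)$ transfers to log boundedness of $(X, D)$ via the coefficient-based identification of $D$ inside $\Delta$. The main obstacle is invoking a sufficiently strong log-bounded BAB-type statement for (plt) log CY pairs on Fano type varieties with finite coefficients; the case analysis handling common components of $D$ and $B$ is extra, but purely formal, bookkeeping.
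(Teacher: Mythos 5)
Your argument is essentially the paper's: the paper forms the convex combination $\tfrac{1}{2}(B+D)$, observes it is $\epsilon'$-lc for $\epsilon'=\min\{1/2m,\epsilon/2\}$ (using Proposition \ref{prop:differentbounded}(i)) with coefficients in a finite set determined by $m$, and concludes by citing the boundedness of $\epsilon$-lc log Calabi--Yau pairs of Fano type \cite[Theorem 1.3]{MR3507257} --- which is the precise black box you should invoke rather than ``a consequence of BAB plus standard complement arguments''. Your extra case analysis for components shared by $D$ and $B$ (where the convex combination fails to be klt) addresses a degenerate case the paper's one-line proof passes over silently, so it is harmless and, if anything, more careful.
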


\begin{proof}
We see that $(X, D)$ is $\epsilon$-plt for some $\epsilon >0$ by Proposition \ref{prop:differentbounded}(i). 
Then we see that $(X, \frac{1}{2}(B+D))$ is $\epsilon'$-lc for $\epsilon':= \min \{1/2m, \epsilon/2 \}$. 
By \cite[Theorem 1.3]{MR3507257}, we see that $(X,D)$ forms a log bounded family. 
%(If $X$ has an ample line bundle $H$ with bounded $H^{\dim X}$, then $D$ has an ample line bundle of degree $H^{\dim X-1} \cdot (-K_X)$. )
\end{proof}

\begin{rem} The following is pointed out by Yoshinori Gongyo and Roberto Svaldi after the submission to arXiv. 
\begin{prop} Let $(X, D)$ be a reduced plt CY pair such that $X$ is of Fano type. 
\begin{enumerate}
\item[(i)] Then $(X,D)$ is log bitarionally bounded. 
\item[(ii)]  Assume that $X$ is $\bQ$-factorial. Then $(X,D)$ is log bounded. 
\end{enumerate}
%We can actually show that $D$ is birationally bounded when $X$ is of Fano type and $(X,D)$ is a plt CY pair as follows (that is, we do not use $m$ in the statement): 
\end{prop}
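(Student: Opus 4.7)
The plan is to deduce (ii) from Proposition \ref{prop:logbddnumequiv} and then to reduce (i) to (ii) using a small $\bQ$-factorial modification.

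For (ii), the task will be to produce an auxiliary $\bQ$-divisor $B\neq D$ with $mB$ integral for some $m$, $K_X+B\equiv 0$, and $(X,B)$ lc, so that Proposition \ref{prop:logbddnumequiv} applies directly. Since $X$ is of Fano type, I would fix an effective $\bQ$-boundary $\Delta_0$ with $(X,\Delta_0)$ klt and $-(K_X+\Delta_0)$ ample. Then for $m$ sufficiently divisible, a general member $H_0\in|{-}m(K_X+\Delta_0)|$ avoids every component of $D$; I set $H:=\frac{1}{m}H_0$ and, for a small rational $t>0$, define
\[
B:=(1-t)D+t(\Delta_0+H).
\]
The numerical equivalence $D\equiv -K_X\equiv\Delta_0+H$ yields $K_X+B\equiv 0$; the coefficient of $D$ in $B$ equals $(1-t)+t\cdot\mult_D(\Delta_0)<1$ since $(X,\Delta_0)$ is klt, and every other coefficient stays below $1$ for $t$ small, so $(X,B)$ is klt and in particular lc; finally $B\neq D$ because $H$ contributes a component outside $\Supp D$. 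With such $B$ in hand, Proposition \ref{prop:logbddnumequiv} delivers log boundedness of $(X,D)$.

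For (i), I would remove the $\bQ$-factoriality assumption by passing to a small $\bQ$-factorial modification. Since $(X,(1-\epsilon)D)$ is klt for small $\epsilon>0$, by \cite[Corollary 1.37]{MR3057950} there exists a small $\bQ$-factorial modification $\pi\colon X'\to X$. Writing $D'$ for the strict transform of $D$, the map $\pi$ is crepant, so $K_{X'}+D'=\pi^{*}(K_X+D)\equiv 0$ and $(X',D')$ is again a reduced plt CY pair with $X'$ $\bQ$-factorial. Fano type is preserved under a small modification: pulling back a klt log Fano boundary from $X$ to $X'$ yields $-(K_{X'}+\Delta')$ big and nef, and a standard Kodaira-type perturbation (write $-(K_{X'}+\Delta')\sim_{\bQ}A+E$ with $A$ ample and $E$ effective, then replace $\Delta'$ by $\Delta'+\delta E$ for small $\delta>0$) upgrades big-and-nef to ample while preserving klt. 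Part (ii) therefore applies to $(X',D')$ and gives log boundedness; since $\pi$ is an isomorphism in codimension one, $D$ and $D'$ are isomorphic in codimension one, so $(X,D)$ is log birationally bounded.

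The main obstacle is the careful construction of $B$ in (ii): I must simultaneously arrange $B\neq D$ and prevent the coefficient of $D$ in $B$ from reaching $1$, which would introduce a new non-klt place and violate the hypothesis of Proposition \ref{prop:logbddnumequiv}. Both conditions ultimately rest on the ampleness of $-(K_X+\Delta_0)$, which allows me to choose $H$ general enough to avoid $\Supp D$, and on klt-ness of $(X,\Delta_0)$, which keeps $\mult_D(\Delta_0)$ strictly below $1$. A secondary subtlety is confirming that Fano type survives the small modification, for which the Kodaira-type perturbation above suffices.
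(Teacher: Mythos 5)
Your reduction of (i) to (ii) via a small $\bQ$-factorial modification is fine (the paper instead proves (i) directly: it runs a $(-K_X)$-MMP to an $\epsilon$-lc weak Fano model $X'$, invokes the BAB-type boundedness of \cite{MR4224714} for $X'$, and bounds $D'\equiv -K_{X'}$). The problem is in (ii), and it is a genuine gap. Proposition \ref{prop:logbddnumequiv} does not bound the set of all pairs admitting \emph{some} numerically trivial lc complement $B$; it bounds, for each \emph{fixed} $m$, the pairs admitting such a $B$ with $mB$ integral. Indeed its proof passes to $(X,\tfrac12(B+D))$, which is $\epsilon'$-lc only for $\epsilon'=\min\{1/2m,\epsilon/2\}$, and then applies \cite[Theorem 1.3]{MR3507257}; both the value of $\epsilon'$ and the coefficient set fed into that theorem depend on $m$. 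Your divisor $B=(1-t)D+t(\Delta_0+H)$ is indeed klt, numerically anticanonical, and different from $D$, but its denominators are governed by those of the auxiliary klt log Fano boundary $\Delta_0$ (and of $H=\tfrac1{m'}H_0$), and there is no a priori bound on these in terms of $n=\dim X$. As $X$ ranges over the family, your $m$ is unbounded, so Proposition \ref{prop:logbddnumequiv} only yields a countable union of bounded families, not boundedness.

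Producing a complement with \emph{uniformly} bounded denominator is exactly the nontrivial input, and it is why the paper's proof of (ii) takes the route it does: after running a $(-K_X)$-MMP to a weak Fano $X'$, boundedness of complements (equivalently, effective base point freeness of $-mK_{X'}$ for some $m=m(n)$, via \cite{MR3997127}) gives $B'=\tfrac1m A'$ with $A'\in|{-}mK_{X'}|$ general, and \cite[6.1(3)]{MR3997127} transports this $m$-complement back to $X$ along the MMP; only then is Proposition \ref{prop:logbddnumequiv} applied with a uniform $m$. Your construction cannot be repaired by choosing $\Delta_0$ or $t$ more carefully: controlling the denominator of some klt numerically trivial complement on a Fano type variety in terms of the dimension alone is essentially Birkar's theorem on complements, not an elementary perturbation statement.
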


\begin{proof} 
(i) By taking a small $\bQ$-factorial modification, we may assume that $X$ is $\bQ$-factorial. 
Let $\mu \colon X \dashrightarrow X'$ be a birational map induced by a $(-K_X)$-MMP which exists since $X$ is a Mori dream space by \cite{MR2601039}. 
Then we see that $-K_{X'}$ is nef and big. 
Let $D':= \mu_*D$. Note that $\mu$ does not contract $D$ since $D$ is big. 
 Then the pair $(X',D')$ is also a plt CY and $\epsilon$-plt for some $\epsilon >0$ by Proposition \ref{prop:differentbounded}. 
By these, we see that $X'$ is an $\epsilon$-lc weak Fano variety, thus it is bounded by \cite{MR4224714}.  
Hence $D' \equiv -K_{X'}$ is also bounded.  

(ii) We also use the notation in (i). Then $(X',D')$ is $\epsilon$-plt and $-K_{X'}$ is nef and big. 
Thus we can take a positive integer $m$ determined by $\dim X$ such that $-mK_{X'}$ is base point free. 
Then, by taking a general member of $A' \in |{-}mK_{X'}|$ and putting $B':= \frac{1}{m} A'$, we obtain a $\frac{1}{m}$-lc CY pair $(X',B')$. Moreover, $K_{X'} +B'$ is an $m$-complement (cf. \cite[2.18]{MR3997127}). 
Then we obtain an $m$-complement $K_X+B$ as in \cite[6.1(3)]{MR3997127}, where $B$ is the sum of the strict transform of $B'$ and some effective divisor supported on the $\mu$-exceptional divisors.  
Hence, by Proposition \ref{prop:logbddnumequiv}, we see that $(X, D)$ is log bounded.   
\end{proof}
\end{rem}

Johnson--Koll\'{a}r \cite{Johnson-Kollar} proved that there are only finitely many quasismooth weighted CY hypersurfaces of fixed dimension. 
Chen \cite{MR3356738} proved that there are only finitely many families of CY weighted complete intersections. 
CY varieties in toric varieties are often considered in mirror symmetry and so on. 
Although toric varieties are unbounded, we can show the following. 

\begin{cor}\label{cor:torichypersurface}
Let $X$ be a normal projective  toric variety with $D \in |{-}K_X|$ with only canonical singularities. 
Then $(X, D)$ form a log bounded family.  (Thus both $X$ and $D$ are bounded. )
\end{cor}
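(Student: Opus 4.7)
The plan is to apply Proposition \ref{prop:logbddnumequiv} with the reduced toric boundary $\Delta_X := \sum_\rho D_\rho$ as the auxiliary lc log Calabi--Yau boundary, exploiting the fact that every projective toric variety carries the standard lc CY pair structure $(X, \Delta_X)$ with $K_X + \Delta_X \sim 0$. After a toric small $\bQ$-factorialization (which preserves the hypotheses and from which log boundedness transfers back), I may assume $X$ is $\bQ$-factorial, so that $K_X$ is $\bQ$-Cartier and the standard machinery of log pairs applies.

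The first step is to verify that $(X, D)$ is a reduced plt CY pair. Since $D$ has only canonical singularities, $D$ is normal, hence irreducible and reduced, and $(D, \mathrm{Diff}_D(0))$ is klt because the different has only standard coefficients strictly less than one; inversion of adjunction then gives $(X, D)$ plt in a neighborhood of $D$, and away from $D$ the variety $X$ is klt as a $\bQ$-Gorenstein toric variety. Combined with $K_X + D \sim 0$, this gives the reduced plt CY structure.

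The second step is to check that a projective toric variety $X$ is always of Fano type. Take a torus-invariant ample Cartier divisor $A = \sum_\rho a_\rho D_\rho$; by replacing $A$ with a linearly equivalent translate $A + \mathrm{div}(\chi^u)$ for a suitable $u \in M$ in the interior of the polytope of $A$, we may assume $a_\rho > 0$ for all $\rho$. Then for small $\epsilon > 0$, the divisor $B_\epsilon := \Delta_X - \epsilon A = \sum_\rho (1 - \epsilon a_\rho) D_\rho$ is effective with coefficients in $(0, 1)$, so $(X, B_\epsilon)$ is klt, while $-(K_X + B_\epsilon) = \epsilon A$ is ample.

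The third step is to apply Proposition \ref{prop:logbddnumequiv} with $B := \Delta_X$ and $m := 1$: $\Delta_X$ is reduced, hence integral; $K_X + \Delta_X \sim 0$; $(X, \Delta_X)$ is lc by standard toric theory; and $\Delta_X \neq D$ because $D$ is irreducible while any projective toric $n$-fold carries at least $n+1$ torus-invariant prime divisors. This yields log boundedness of $(X, D)$, from which the individual boundedness of both $X$ and $D$ follows. The main obstacle is the careful verification of the plt condition when $X$ may fail to be $\bQ$-Gorenstein; this is why I open with the toric small $\bQ$-factorialization, a well-known construction that keeps us inside the toric category and preserves the canonicity of the divisor $D$.
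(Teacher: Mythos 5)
Your overall strategy coincides with the paper's: exhibit the toric boundary $\Delta=\sum_\rho D_\rho$ as an auxiliary lc Calabi--Yau structure with $K_X+\Delta\sim 0$, check that $(X,D)$ is a reduced plt CY pair, and apply Proposition \ref{prop:logbddnumequiv}. Your verification that a projective toric variety is of Fano type (perturbing $\Delta$ by a small ample torus-invariant divisor) is correct and usefully fills in a hypothesis the paper leaves implicit. The gap is in your verification of the plt condition. You assert that $(D,\mathrm{Diff}_D(0))$ is klt ``because the different has only standard coefficients strictly less than one.'' That inference is invalid: boundary coefficients strictly less than one do not imply klt (e.g.\ $(\bA^2,\tfrac12(C_1+\cdots+C_5))$ for five lines through the origin has discrepancy $-3/2$ at the first blow-up), and, worse, the standardness of the coefficients of the different is itself normally deduced from the pair already being lc at the relevant codimension-two points, so the step is circular as written. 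Note also that your argument never genuinely uses the hypothesis that $D$ has canonical singularities. What is actually needed, and what the paper supplies, is that $\mathrm{Diff}_D(0)=0$: a toric variety is simplicial, hence $\bQ$-factorial, in codimension $2$, and the cited toric result then gives that $X$ has canonical (Gorenstein) singularities in codimension $2$, so $D\sim -K_X$ is Cartier at every codimension-one point of $D$ and the different vanishes. Then $(D,0)$ is klt precisely because $D$ is canonical, and inversion of adjunction gives plt near $D$; away from $D$ one argues as you do.

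A secondary issue is your opening small $\bQ$-factorialization. It is unnecessary, since every divisor class you use ($K_X+D$, $K_X+\Delta$, $K_X+\Delta-\epsilon A$) is already $\bQ$-Cartier without it, and Proposition \ref{prop:logbddnumequiv} does not require $\bQ$-factoriality. Moreover the claim that log boundedness ``transfers back'' across the small contraction, and that the strict transform of $D$ remains canonical, are asserted without justification and would each need an argument. Dropping that step, as the paper does, both simplifies and repairs the proof.
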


\begin{proof}
Note that, since $X$ is toric and $\bQ$-Gorenstein in codimension 2, 
we see that $X$ has only canonical singularities in codimension 2 by \cite[Theorem 5]{MR1027448}. 
Thus $D$ is Cartier in codimension 2 and $(X,D)$ is plt by inversion of adjunction (cf. \cite[Theorem 5.50]{MR1658959}). 

%The claim is an easy consequence of the finiteness of $\epsilon$-lc log Fano pairs. 
Let $\Delta \subset X$ be the union of toric divisors. 
Then we see that $(X,\Delta)$ is lc. 
By applying Proposition \ref{prop:logbddnumequiv}, we obtain the claim. 
\end{proof}

\section*{Acknowledgement}
The author would like to thank Keiji Oguiso for valuable discussions through e-mails and allowing him to use the argument in Example \ref{eg:unbdd}. 
He also thanks Chen Jiang for pointing out mistakes and improvements in the manuscript. He is grateful to Yoshinori Gongyo, Kenji Hashimoto and Roberto Svaldi for valuable comments. 
The author is grateful to the anonymous referees for careful reading and many valuable comments. 	
	This work was partially supported by JSPS KAKENHI Grant Numbers JP17H06127, JP19K14509. 
	
\bibliographystyle{amsalpha}
\bibliography{sanobibs-bddlogcy}

\end{document}